\def\ra {\rightarrow}
\def\be{\begin{equation}}   \def\ee{\end{equation}}
\def\ba   {\begin{array}}      \def\ea   {\end{array}}
\def\bea  {\begin{eqnarray}}   \def\eea  {\end{eqnarray}}
\def\bean {\begin{eqnarray*}}  \def\eean {\end{eqnarray*}}
\newtheorem{theorem} {Theorem}
\newtheorem{lemma}{Lemma}
\newtheorem{definition} {Definition}
\newtheorem{remark}{Remark}
\theoremstyle{definition}
\newtheorem{example}{Example}
\newcommand{\bo} {\ensuremath{{\bf i_1}}}
\newcommand{\bos}{\ensuremath{{\bf i_1^{\text 2}}}}
\newcommand{\bts}{\ensuremath{{\bf i_2^{\text 2}}}}
\newcommand {\bjp}{\ensuremath{{\bf j_1}}}
\newcommand {\bjd}{\ensuremath{{\bf j_2}}}
\newcommand{\bt} {\ensuremath{{\bf i_2}}}
\newcommand{\bb} {\ensuremath{{\bf i_3}}}
\newcommand{\bbs} {\ensuremath{{\bf i_3^{\text 2}}}}
\newcommand{\bq} {\ensuremath{{\bf i_4}}}
\newcommand{\bk} {\ensuremath{{\bf i_{k}}}}
\newcommand {\bjt}{\ensuremath{{\bf j_3}}}
\newcommand {\bjk}{\ensuremath{{\bf j_k}}} %AJOUT
\newcommand{\id}[1]{\gamma_{#1}}
\newcommand{\ic}[1]{\overline{\gamma}_{#1}}
\newcommand{\mB}{\mathbb{B}}
\newcommand{\mC}{\ensuremath{\mathbb{C}}}
\newcommand{\mD}{\ensuremath{\mathbb{D}}}
\newcommand{\mN}{\ensuremath{\mathbb{N}}}
\newcommand{\mR}{\ensuremath{\mathbb{R}}}
\newcommand{\mT}{\ensuremath{\mathbb{T}}}
\newcommand{\cM}{\ensuremath{\mathcal{M}}}
\newcommand{\mM}{\ensuremath{\mathbb{M}}}
\newcommand{\cK}{\ensuremath{\mathcal{K}}}
\newcommand{\op}{\left(}
\newcommand{\fp}{\right)}
\newcommand{\oa}{\left\{}
\newcommand{\fa}{\right\}}
\newcommand{\oc}{\left[}
\newcommand{\fc}{\right]}
\newcommand{\cT}{\mathcal{T}}
\newcommand{\cF}{\mathcal{F}}
\newcommand{\cH}{\mathcal{H}}
\newcommand{\vspan}{\ensuremath{\mathrm{span}}}
\newcommand{\gm}[1]{\ensuremath{\gamma_{#1}}}
\newcommand{\gmc}[1]{\ensuremath{\overline{\gamma}_{#1}}}
\newcommand{\im}[1]{\ensuremath{\mathrm{\mathbf{i}}_{\mathbf{#1}}}}
\newcommand{\jm}[1]{\ensuremath{\mathrm{\mathbf{j}}_{\mathbf{#1}}}}
\newcommand{\TC}{\ensuremath{\mathbb{TC}}}
\newcommand{\mI}{\ensuremath{\mathbb{I}}}
\title{Tricomplex Distance Estimation for Filled-in Julia Sets and Multibrot Sets}
\author[1]{Guillaume Brouillette \thanks{E-mail: {\tt guillaume.brouillette@uqtr.ca}}}
\author[2]{Pierre-Olivier Paris\'e\thanks{E-mail: {\tt pierre-olivier.parise.1@ulaval.ca}}}
\author[1]{Dominic Rochon\thanks{E-mail: {\tt dominic.rochon@uqtr.ca}}}
\affil[1]{Département de mathématiques et d'informatique, Université du Québec\\
 C.P. 500, Trois-Rivières, Québec, Canada, G9A 5H7\\}
\affil[2]{Département de mathématiques et de statistique, Université Laval\\
1045, av. de la Médecine, Québec, Canada, G1V 0A6.}
\date{}
\begin{document}
\maketitle

%%Abstract
\begin{abstract}
In this article, we present a distance estimation formula that can be used to ray trace 3D slices of the filled-in Julia sets and the Multibrot sets generated by the tricomplex polynomials of the form $\eta^p+c$ where $p$ is any integer greater than $1$.

\end{abstract}\vspace{0.5cm} 
\noindent\textbf{AMS subject classification:} 37F50, 32A30, 30G35, 00A69 \\
\textbf{Keywords:} Tricomplex numbers, Julia sets, Multibrot sets, Generalized Mandelbrot sets, Ray tracing, 3D fractals, Tetrabrot, Airbrot, Arrowheadbrot, Metatronbrot

\section*{Introduction}

	The complex space may be generalized in a few different ways. A simple but interesting way to extend the complex numbers is to use more imaginary units $\im{}$. Indeed, by using two imaginary units $\im{1}$ and $\im{2}$ rather than a single one as in the complex space, we obtain a four-dimensional number system called the bicomplex numbers. Moreover, introducing a new unit generates two additional dimensions : one associated to the imaginary unit $\im{2}$ and one to the hyperbolic unit $\jm{1}:=\im{1}\im{2}$.

	Furthermore, it is possible to generalize the bicomplex space to the tricomplex space by introducing another imaginary unit $\im{3}$. Again, this new unit doubles the dimension of the space. As a matter of fact, the tricomplex number system contains one real, four imaginary and three hyperbolic units.

	Many concepts and results in complex analysis and dynamics can be generalized to the bicomplex and tricomplex spaces \cite{Luna, RochonMartineau, Rochon1, Rochon3, Rochon2, RochonShapiro, vajiac}. For instance, fractals like the Mandelbrot and Julia sets as well as many of their properties may be extended to the tricomplex space \cite{Dang, GarantPelletier, Martineau, Parise, Wang}. Their generalizations will be the main interest of this article. Since the tricomplex space contains many imaginary and hyperbolic units, fractals become richer when generalized to this space.
	
	To generate visuals of such kind of fractals, we can use the ray tracing technique \cite{Martineau, Peitgen}. To do so, the distance between a point out of the fractal and the fractal itself is needed. Although an exact formula is not known, upper and lower bounds of this distance are known in the complex plane and can be extended to the bicomplex space \cite{Martineau, RochonMartineau}. In this article, we show how to extend and apply these results to the tricomplex space.
	
	 In section 1, we will introduce the tricomplex numbers as well as some related concepts. Then, section 2 presents the Multibrot and the filled-in Julia sets. Some properties are shown and generalized up to the tricomplex space. Next, we see some important results related to the distance from a point to a complex fractal in section 3. Furthermore, we see how these results can be used in when it comes to tricomplex fractals. Finally, in section 4, we see the concept of tridimensional principal slice and use the distance bounds approximations to generate multiple 3D fractals.

\section{Tricomplex Numbers}\label{SecBasics}
A tricomplex number $\eta$ is a pair of bicomplex numbers $\eta_1$ and $\eta_2$ combined with the imaginary unit $\bb$ such that $\bbs = -1$, and
	\begin{align*}
	\eta = \eta_1 + \eta_2 \bb \text{.}
	\end{align*}
Any bicomplex numbers can be written with two complex numbers and an imaginary unit $\bt$ such that $\bts = -1$, and thus
	\begin{align*}
	\eta = \eta_{11} + \eta_{12} \bt + \eta_{21} \bb + \eta_{22} \bjt\text{,}
	\end{align*}
where $\bjt := \bt\bb$, $\eta_1 = \eta_{11} + \eta_{12} \bt$, and $\eta_2 = \eta_{21} + \eta_{22} \bt$. Hence, a tricomplex number $\eta$ may be described by the three following expressions:
	\begin{align*}
	\eta &= \eta_1 + \eta_2 \bb \\
	&= \eta_{11} + \eta_{12} \bt + \eta_{21} \bb + \eta_{22} \bjt \\
	&= x_0 + x_1 \bo + x_2 \bt + x_3 \bjp + x_4 \bb + x_5 \bjd + x_6 \bjt + x_7 \bq\text{,}
	\end{align*}
where $\bjp := \bo \bt$, $\bjd := \bo \bb$, $\bq := \bo \bt \bb$, and $\bos = -1$. The set of tricomplex numbers is denoted, in \cite{Baley}, by $\mC_3$ or by $\mM (3)$ in \cite{GarantRochon, RochonParise}, but we will denote it by $\mT \mC$ in conformity with the set of bicomplex numbers denoted by $\mB \mC$ in many references.

The addition of tricomplex numbers is defined term-by-term, and the product of tricomplex numbers is done by polynomial-like product and using the relations between each units that are gathered in Table \ref{tabC1}.

\begin{table}[h]
\centering
\begin{tabular}{c|*{9}{c}}
$\cdot$ & 1 & $\mathbf{i_1}$ & $\mathbf{i_2}$ & $\mathbf{i_3}$ & $\mathbf{i_4}$ & $\mathbf{j_1}$ & $\mathbf{j_2}$ & $\mathbf{j_3}$\\\hline
1 & 1 & $\mathbf{i_1}$ & $\mathbf{i_2}$ & $\mathbf{i_3}$ & $\mathbf{i_4}$ & $\mathbf{j_1}$ & $\mathbf{j_2}$ & $\mathbf{j_3}$\\
$\mathbf{i_1}$ & $\mathbf{i_1}$ & $-\mathbf{1}$ & $\mathbf{j_1}$ & $\mathbf{j_2}$ & $-\mathbf{j_3}$ & $-\mathbf{i_2}$ & $-\mathbf{i_3}$ & $\mathbf{i_4}$\\
$\mathbf{i_2}$ & $\mathbf{i_2}$ & $\mathbf{j_1}$ & $-\mathbf{1}$ & $\mathbf{j_3}$ & $-\mathbf{j_2}$ & $-\mathbf{i_1}$ & $\mathbf{i_4}$ & $-\mathbf{i_3}$\\
$\mathbf{i_3}$ & $\mathbf{i_3}$ & $\mathbf{j_2}$ & $\mathbf{j_3}$ & $-\mathbf{1}$ & $-\mathbf{j_1}$ & $\mathbf{i_4}$ & $-\mathbf{i_1}$ & $-\mathbf{i_2}$\\
$\mathbf{i_4}$ & $\mathbf{i_4}$ & $-\mathbf{j_3}$  & $-\mathbf{j_2}$ & $-\mathbf{j_1}$ & $-\mathbf{1}$ & $\mathbf{i_3}$ & $\mathbf{i_2}$ & $\mathbf{i_1}$\\
$\mathbf{j_1}$ & $\mathbf{j_1}$ & $-\mathbf{i_2}$  & $-\mathbf{i_1}$ & $\mathbf{i_4}$ & $\mathbf{i_3}$ & $\mathbf{1}$ & $-\mathbf{j_3}$ & $-\mathbf{j_2}$\\
$\mathbf{j_2}$ & $\mathbf{j_2}$ & $-\mathbf{i_3}$  & $\mathbf{i_4}$ & $-\mathbf{i_1}$ & $\mathbf{i_2}$ & $-\mathbf{j_3}$ &  $\mathbf{1}$ & $-\mathbf{j_1}$\\
$\mathbf{j_3}$ & $\mathbf{j_3}$ & $\mathbf{i_4}$ &$-\mathbf{i_3}$  & $-\mathbf{i_2}$ & $\mathbf{i_1}$ & $-\mathbf{j_2}$ & $-\mathbf{j_1}$ &  $\mathbf{1}$ \\
\end{tabular}
\caption{Products of tricomplex imaginary and hyperbolic units.}\label{tabC1}
\end{table}

Hence, $( \TC , + , \cdot )$ forms a commutative unitary ring. Restricted to a vector space, the set of tricomplex numbers may be simply seen as $\mC^4$ or $\mR^8$. The units $\bo$, $\bt$, $\bb$, and $\bq$ are called imaginary units and $\bjp$, $\bjd$, $\bjt$ are called hyperbolic units. Thus, from those units, we define the following subsets of $\mT \mC$, $\mC (\bk ) := \oa x + y \bk \, : \, x, y \in \mR \fa$ for $\bk \in \oa \bo , \bt , \bb , \bq \fa$, and $\mD (\bjk ) := \oa x + y \bjk \, : \, x , y \in \mR \fa$ for $\bjk \in \oa \bjp , \bjd , \bjt \fa$. The sets $\mC (\bk )$ are all isomorphic to the complex numbers, and the sets $\mD (\bjk )$ are all isomorphic to the hyperbolic numbers (see \cite{vajiac2,Sobczyk}), with the usual operations.

It is also important to see that any $\eta \in \mT \mC$ has the following idempotent representation
	\begin{align}
	\eta = (\eta_1 - \eta_2 \bt ) \id{3} + (\eta_1 + \eta_2 \bt ) \ic{3} := \eta_{\id{3}} \id{3} + \eta_{\ic{3}} \ic{3} \label{idempBiDecomp}
	\end{align}
where $\id{3} := \frac{1 + \bjt}{2}$, and $\ic{3} := \frac{1 - \bjt}{2}$. This representation is called idempotent since the tricomplex numbers $\id{3}$ and $\ic{3}$ are idempotent elements, meaning that $\gamma^2 = \gamma$. Moreover, we see that $\id{3} \ic{3} = 0$ and $\id{3} + \ic{3} = 1$. Thus, this representation allows to add, multiply and divide tricomplex numbers term-by-term. If we define the set $S_3 := \oa \id{3} , \ic{3} \fa$\footnote{In \cite[p. 314]{Baley}, $S_n = \oa \id{n-1}, \ic{n-1} \fa$ where $n = 3$ specifically for the set $\mM (3)$.}, then \eqref{idempBiDecomp} can be rewritten as
	\begin{align*}
	\eta = \sum_{\gamma \in S_3} \eta_{\gamma} \gamma \text{.}
	\end{align*}
	There is also another idempotent representation on four idempotent elements. We define $\id{1} := \frac{1 + \bjp}{2}$, and $\ic{1}:= \frac{1 - \bjp}{2}$. Then any $\eta \in \mT \mC$ can be rewritten as
	\begin{align}
		\eta &= \op \oc (\eta_{11} + \eta_{22}) - (\eta_{12} - \eta_{21}) \bo \fc \id{1} + \oc (\eta_{11} + \eta_{22}) + (\eta_{12} - \eta_{21}) \bo \fc \ic{1}  \fp \id{3} \notag\\
		& \quad + \op \oc (\eta_{11} - \eta_{22} ) - (\eta_{12} + \eta_{21}) \bo \fc \id{1} + \oc (\eta_{11} - \eta_{22}) - (\eta_{12} + \eta_{21}) \bo \fc \ic{1} \fp \ic{3} \text{.} \notag\\
		& := \eta_{\id{1}\id{3}} \id{1} \id{3} + \eta_{\ic{1}\id{3}} \ic{1}\id{3} + \eta_{\id{1}\ic{3}} \id{1}\ic{3} + \eta_{\ic{1}\ic{3}} \ic{1}\ic{3} \label{idempTriDecomp}\text{.}
	\end{align}
	The elements $\id{1} \id{3}$, $\ic{1}\id{3}$, $\id{1}\ic{3}$, and $\ic{1}\ic{3}$ are idempotent elements, and $\id{1} \id{3} + \ic{1}\id{3} + \id{1}\ic{3} + \ic{1}\ic{3} = 1$. If we define the set 
		\begin{align*}
		S_{1,3} := \id{3} S_1 \cup \ic{3} S_1 = \oa \id{1} \id{3}, \ic{1} \id{3} , \id{1} \ic{3} , \ic{1} \ic{3} \fa\text{,}
		\end{align*}
	where $S_1 := \oa \id{1} , \ic{1} \fa$, then we may rewrite \eqref{idempTriDecomp} as
		\begin{align*}
		\eta = \sum_{\gamma \in S_{1,3}} \eta_{\gamma} \gamma \text{.}
		\end{align*}
	
	From the idempotent representation, we can define two types of Cartesian products. The first one is called the $\mT \mC$-Cartesian product, and is defined as
	\begin{align*}
	X_1 \times_{\id{3}} X_2 := \oa x_1 \id{3} + x_2 \ic{3} \, : \, x_1 \in X_1 \text{, } x_2 \in X_2 \fa
	\end{align*}			
	where $X_1, X_2 \subset \mB \mC$. It is easy to see that the application\label{eqGamma2}
		\begin{align*}
		\Gamma_2 : X_1 \times X_2 &\ra X_1 \times_{\id{3}} X_2\\
		 (x_1,x_2 )&\mapsto x_1 \id{3} + x_2 \ic{3}
		\end{align*}
	is a homeomorphism. The second one is called the $\mB\mC$-Cartesian product, and is defined as
	\begin{align*}
	X_1 \times_{\id{1}} X_2 := \oa x_1 \id{1} + x_2 \ic{1} \, : \, x_1 \in X_1 \text{, } x_2 \in X_2 \fa
	\end{align*}
	where $X_1 , X_2 \subset \mC (\bo )$. It is also easy to see that the application\label{eqGamma1}
		\begin{align*}
		\Gamma_1 : X_1 \times X_2 &\ra X_1 \times_{\id{1}} X_2\\
		(x_1, x_2) &\mapsto x_1 \id{1} + x_2 \ic{1}
		\end{align*}
	is a homeomorphism. We  respectively denote by $\id{3} : \mT \mC \ra \mB \mC$ and $\ic{3} : \mT \mC \ra \mB \mC$ the projections on the first and the second idempotent component of the idempotent representation of a tricomplex number in $\id{3}$ and $\ic{3}$. Similarly, we denote respectively by $\id{1} : \mB \mC \ra \mC (\bo )$ and $\ic{1} : \mB \mC \ra \mC (\bo )$ the projections on the first and the second idempotent component of the idempotent representation of a bicomplex number in $\id{1}$ and $\ic{1}$.
	
	In this article, we use the Euclidean norm (noted $\Vert \cdot \Vert_3$) of $\mR^8$. However, from \cite{Baley}, we have the following formula for the norm
		\begin{align}\label{NormIdem}
		\Vert \eta \Vert_3 &= \sqrt{\frac{\Vert \eta_{\id{3}} \Vert_2^2 + \Vert \eta_{\ic{3}} \Vert_2^2}{2}} = \sqrt{\frac{|\eta_{\id{1}\id{3}}|^2 + |\eta_{\ic{1}\id{3}}|^2 + |\eta_{\id{1}\ic{3}}|^2 + |\eta_{\ic{1}\ic{3}}|^2}{4}}
		\end{align}
where $\Vert \cdot \Vert_2$ is the Euclidean norm of a bicomplex number, and $|\cdot |$ is the Euclidean norm of a complex number. 

	There are several important types of discus in the tricomplex space. Let $r_1 , r_2 > 0$, and $\eta \in \mT\mC$. We define the \textbf{tricomplex open discus} $D_3(\eta, r_1, r_2)$ as the $\mT\mC$-Cartesian product of two balls $B_2 (\eta_{\id{3}} , r_1)$, and $B_2( \eta_{\ic{3}} , r_2)$ of $\mM (2)$, that is $D_3 (\eta , r_1 , r_2 ) := B_2 (\eta_{\id{3}} , r_1) \times_{\id{3}} B_2 ( \eta_{\ic{3}} , r_2 )$. In a similar way, we define the \textbf{tricomplex closed discus} as $\overline{D_3}( \eta , r_1 , r_2 ) := \overline{B_2}(\eta_{\id{3}} , r_1) \times_{\id{3}} \overline{B_2} (\eta_{\ic{3}} , r_2)$. If $r:= r_1 = r_2$, then we simply denote the opened, and closed discii as $D_3 (\eta , r)$, and $\overline{D_3} (\eta , r)$ respectively. Similar definitions for the opened, and closed discii hold for the bicomplex case.

	In the next section, we will introduce the filled-in Julia sets, and the Multibrot sets of a certain type of polynomials.

\section{Filled-in Julia Sets and Multibrot Sets}\label{sectionJuliaProperties}
	In this section, we define the filled-in Julia sets and Multibrot sets for the family of polynomials $f_c(z) := z^p + c$ where $p \geq 2$ is an integer, and $c$ is a complex, bicomplex or tricomplex number.

	In the case of complex numbers, this family characterized all the polynomials of degree $p$ which have exactly one critical point. In fact, according to J. Milnor \cite{Milnor1}, these polynomials are conjugated by an affine transformation to the polynomial $f_c$, where the parameter $c$ is unique up to a multiplication by a $(p-1)$-th root of unity. The reader may also see \cite{Parise} for details.

	We start by the case where $c$ is a complex number. We suppose that $c \in \mC (\bo )$ for the sake of consistency. 
	
	The \textbf{filled-in Julia} set of the polynomial $f_c (z) = z^p + c$ for fixed integer $p \geq 2$ and complex number $c \in \mC (\bo )$ is defined as the set
		\begin{align*}
		\cK_{c}^p := \oa z \in \mC (\bo ) \, : \, \oa f_c^m (z) \fa_{m = 1}^{\infty} \text{ is bounded} \fa \text{.}
		\end{align*}
	Here are some basic properties related with these sets \cite{Milnor2}:
		\begin{enumerate}
		\item $\cK_{c}^p \subset \overline{B_1} (0, R)$ where $R := \max \oa |c|, 2^{1/(p-1)} \fa$;
		\item $z \in \cK_{c}^p$ if and only if $|f_c^m (z) | \leq R$ for all integer $m \geq 1$;
		\item $\cK_{c}^p$ is a closed set, and it is a connected set if and only if $0 \in \cK_{c}^p$.
		\end{enumerate}
	
	The last property is a way to introduce the \textbf{Multibrots} denoted by $\cM^p$. There are defined as
		\begin{align*}
		\cM^p &:= \oa c \in \mC (\bo ) \, : \, \cK_{c}^p \text{ is connected} \fa \\
		&= \oa c \in \mC (\bo ) \, : \, \oa f_c^m (0) \fa_{m=1}^{\infty} \text{ is bounded} \fa \text{.}
		\end{align*}
	In \cite{RochonParise}, it is shown that 
		\begin{enumerate}
		\item $\cM^p \subset \overline{B_1} (0, 2^{1/(p-1)})$;
		\item $c \in \cM^p$ if and only if $|f_c^m(0)| \leq 2^{1/(p-1)}$ $\forall m \geq 1$;
		\item $\cM^p$ is a closed set.
		\end{enumerate}
%	It is also possible, by modifying the proof of A. F. Beardon \cite{Beardon}, to show that $\cM^p$ is a simply connected set for any % %integer $p \geq 2$.
	
	When the parameter $c \in \mB \mC$ and fixed, we define the \textbf{bicomplex filled-in Julia} set of the polynomial $f_c$ for a fixed integer $p \geq 2$ as the set
		\begin{align*}
		\cK_{2,c}^p := \oa w \in \mB \mC \, : \, \oa f_c^m (w ) \fa_{m = 1}^{\infty} \text{ is bounded} \fa \text{.}
		\end{align*}
	The sets $\cK_{2,c}^p$ has the following properties.
		\begin{theorem}\label{thmPropK2}
		Let $p \geq 2$ be an integer, and $c \in \mB \mC$. Then,
			\begin{enumerate}
			\item $\cK_{2,c}^p = \cK_{c_{\id{1}}}^p \times_{\id{1}} \cK_{c_{\ic{1}}}^p$ where $c = c_{\id{1}} \id{1} + c_{\ic{1}} \ic{1}$;
			\item $\cK_{2, c}^p \subset \overline{D_{2}} (0, R_{\gm{1}}, R_{\gmc{1}})$ where $R_{\gamma} := \max \oa |c_{\gamma}|, 2^{1/(p-1)} \fa$ for $\gamma \in S_1$;
			\item $\cK_{2,c}^p$ is a compact set of $\mB \mC$.
			\end{enumerate}
		\end{theorem}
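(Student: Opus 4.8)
The plan is to push all three assertions back to the already-established complex case by using the fact that, in the idempotent representation $w = w_{\id{1}} \id{1} + w_{\ic{1}} \ic{1}$, both addition and multiplication of bicomplex numbers are performed componentwise. First I would record the single computation that drives everything: since $\id{1}^2 = \id{1}$, $\ic{1}^2 = \ic{1}$, and $\id{1}\ic{1} = 0$, one has $w^p = w_{\id{1}}^p \id{1} + w_{\ic{1}}^p \ic{1}$, so that with $c = c_{\id{1}} \id{1} + c_{\ic{1}} \ic{1}$ the polynomial splits as
\begin{align*}
f_c(w) = f_{c_{\id{1}}}(w_{\id{1}}) \id{1} + f_{c_{\ic{1}}}(w_{\ic{1}}) \ic{1}.
\end{align*}
A straightforward induction on $m$ then yields $f_c^m(w) = f_{c_{\id{1}}}^m(w_{\id{1}}) \id{1} + f_{c_{\ic{1}}}^m(w_{\ic{1}}) \ic{1}$ for every integer $m \geq 1$; that is, $\Gamma_1$ conjugates $f_c$ to the pair $(f_{c_{\id{1}}}, f_{c_{\ic{1}}})$.

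For part 1, I would combine this diagonalization with the bicomplex analogue of the norm formula \eqref{NormIdem}, namely $\Vert f_c^m(w) \Vert_2 = \sqrt{(|f_{c_{\id{1}}}^m(w_{\id{1}})|^2 + |f_{c_{\ic{1}}}^m(w_{\ic{1}})|^2)/2}$. Because a scalar quantity of this form is bounded in $m$ if and only if each of its two summands is, the orbit $\{f_c^m(w)\}_{m=1}^{\infty}$ is bounded exactly when both scalar orbits $\{f_{c_{\id{1}}}^m(w_{\id{1}})\}$ and $\{f_{c_{\ic{1}}}^m(w_{\ic{1}})\}$ are bounded. Translating through the definition of the complex set $\cK_c^p$, this reads: $w \in \cK_{2,c}^p$ if and only if $w_{\id{1}} \in \cK_{c_{\id{1}}}^p$ and $w_{\ic{1}} \in \cK_{c_{\ic{1}}}^p$, which is precisely the statement $\cK_{2,c}^p = \cK_{c_{\id{1}}}^p \times_{\id{1}} \cK_{c_{\ic{1}}}^p$.

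Parts 2 and 3 are then corollaries of part 1. For part 2, I would invoke the complex containment $\cK_{c_\gamma}^p \subset \overline{B_1}(0, R_\gamma)$ with $R_\gamma = \max\oa |c_\gamma|, 2^{1/(p-1)} \fa$ for each $\gamma \in S_1$; taking the $\mB\mC$-Cartesian product of the two inclusions gives $\cK_{2,c}^p \subset \overline{B_1}(0, R_{\id{1}}) \times_{\id{1}} \overline{B_1}(0, R_{\ic{1}}) = \overline{D_2}(0, R_{\id{1}}, R_{\ic{1}})$ by the definition of the closed discus. For part 3, each factor $\cK_{c_\gamma}^p$ is closed and bounded in $\mC(\bo) \cong \mR^2$, hence compact, so the ordinary product $\cK_{c_{\id{1}}}^p \times \cK_{c_{\ic{1}}}^p$ is compact; since $\Gamma_1$ is a homeomorphism, its image $\cK_{2,c}^p$ is a compact subset of $\mB\mC$.

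The only genuinely non-routine step is the diagonalization of the iterate, and even that rests entirely on the ring structure and idempotent decomposition already stated in Section \ref{SecBasics}. The main thing I would be careful about is that the equivalence ``bounded orbit $\Leftrightarrow$ both componentwise orbits bounded'' is truly \emph{forced} by the norm formula and not merely suggested by the Cartesian-product notation; I would therefore make that equivalence explicit rather than fold it silently into the definition of $\times_{\id{1}}$, since it is exactly the hinge on which all three parts turn.
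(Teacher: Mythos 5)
Your proposal is correct and follows essentially the same route as the paper's proof, which likewise rests on the iterate decomposition $f_c^m(w) = \sum_{\gamma \in S_1} f_{c_\gamma}^m(w_\gamma)\gamma$ and the norm formula to get part 1, deduces part 2 from the complex containment $\cK_{c_\gamma}^p \subset \overline{B_1}(0, R_\gamma)$, and obtains part 3 from the homeomorphism $\Gamma_1$ applied to the compact product $\cK_{c_{\id{1}}}^p \times \cK_{c_{\ic{1}}}^p$. The only difference is cosmetic: you make explicit the induction behind the splitting of the iterates and the boundedness equivalence, which the paper states without elaboration.
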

		\begin{proof}
		Each part follows from the fact that the iterates of $f_c$ can be expressed as $f_c^m (w) = \sum_{\gamma \in S_1} f_{c_{\gamma}}^m (w_{\gamma}) \gamma$, and so
			\begin{align*}
			\Vert f_c^m (w) \Vert_2 = \sqrt{\frac{|f_{c_{\id{1}}} (w_{\id{1}}) |^2 + |f_{c_{\ic{1}}} (w_{\ic{1}} )|^2}{2}} \text{.}
			\end{align*}
		Thus, the sequence $\oa f_{c}^m (w) \fa_{m=1}^{\infty}$ remains bounded if and only if the sequence $\oa f_{c_{\gamma}}^m (w_{\gamma})\fa_{m=1}^{\infty}$ is bounded $\forall \gamma \in S_1$, and we obtain the first statement. Considering that $\cK_{c_\gamma}^p \subset \overline{B_1}(0,R_\gamma)$ where $R_\gamma = \max\{|c_\gamma|, 2^{1/(p-1)}\}$, the first statement implies the second. Finally, the application $\Gamma_1$ introduced in section \ref{eqGamma1} is a homeomorphism between $\cK_{2,c}^p$ and the compact set $\cK_{c_{\gm{1}}}^p \times \cK_{c_{\gmc{1}}}^p$. Consequently, the set $\cK_{2,c}^p$ is also compact.
		\end{proof}
		\begin{theorem}\label{thmConnectedofM2}
		$\cK_{2,c}^p$ is a connected set if and only if $0 \in \cK_{2, c}^p$.
		\end{theorem}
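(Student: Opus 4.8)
The plan is to exploit the product decomposition established in Theorem~\ref{thmPropK2}, namely $\cK_{2,c}^p = \cK_{c_{\id{1}}}^p \times_{\id{1}} \cK_{c_{\ic{1}}}^p$ together with the homeomorphism $\Gamma_1 : \cK_{c_{\id{1}}}^p \times \cK_{c_{\ic{1}}}^p \to \cK_{2,c}^p$, and to reduce the bicomplex statement to the already known complex criterion (the third property listed for the complex filled-in Julia sets): $\cK_c^p$ is connected if and only if $0 \in \cK_c^p$.

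First I would record that each complex factor is nonempty. The equation $f_c(z) = z$ has a root by the fundamental theorem of algebra, and any fixed point has a constant, hence bounded, orbit, so it lies in $\cK_c^p$; thus both $\cK_{c_{\id{1}}}^p$ and $\cK_{c_{\ic{1}}}^p$ are nonempty. I would then invoke the standard topological fact that a product of two nonempty spaces is connected if and only if each factor is connected. Since $\Gamma_1$ is a homeomorphism, this shows that $\cK_{2,c}^p$ is connected if and only if both $\cK_{c_{\id{1}}}^p$ and $\cK_{c_{\ic{1}}}^p$ are connected.

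Next I would apply the complex criterion to each factor separately: $\cK_{c_{\id{1}}}^p$ is connected if and only if $0 \in \cK_{c_{\id{1}}}^p$, and likewise for the $\ic{1}$-component. Finally I would translate the condition on the two factors back to the bicomplex origin. Writing $0 = 0\,\id{1} + 0\,\ic{1}$ in idempotent form, the membership $0 \in \cK_{2,c}^p = \cK_{c_{\id{1}}}^p \times_{\id{1}} \cK_{c_{\ic{1}}}^p$ holds exactly when its two idempotent components, both equal to $0$, lie in $\cK_{c_{\id{1}}}^p$ and $\cK_{c_{\ic{1}}}^p$ respectively. Chaining these equivalences yields: $\cK_{2,c}^p$ connected $\iff$ both factors connected $\iff$ $0$ lies in both factors $\iff$ $0 \in \cK_{2,c}^p$.

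The underlying calculations are negligible here; the only point requiring genuine care is the product-connectedness step, where one must verify that both factors are nonempty (handled by the fixed-point remark above) so that connectivity of the product is \emph{equivalent} to connectivity of each factor, rather than merely implied in one direction. I expect this nonemptiness and both-directions bookkeeping, rather than any hard estimate, to be the main thing to get right.
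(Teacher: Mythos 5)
Your proof is correct and follows essentially the same route as the paper: both reduce the bicomplex statement to the complex criterion via the idempotent decomposition $\cK_{2,c}^p = \cK_{c_{\id{1}}}^p \times_{\id{1}} \cK_{c_{\ic{1}}}^p$ and the homeomorphism $\Gamma_1$, using that a product is connected if and only if each factor is. Your only addition is to make explicit the nonemptiness of each factor (via a fixed point of $f_c$), a hypothesis the paper's terser proof leaves implicit; this is a sound bit of extra care, not a different method.
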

		\begin{proof}
		The case $p = 2$ of this theorem appears in \cite{Rochon1, Rochon2}. More generally, let $c = c_{\gm{1}}\gm{1} + c_{\gmc{1}}\gmc{1}$. Since $\Gamma_1$, defined in section \ref{eqGamma1}, is a homeomorphism, we have that $\cK_{2,c}^p$ is a connected set if and only if $\cK_{c_1}^p$ and $\cK_{c_2}^p$ are connected sets in the complex plane. Therefore, we conclude that $\cK^p_{2,c}$ is connected if and only if $0\in\cK^p_{c_{\gm{1}}}$ and $0\in\cK^p_{c_{\gmc{1}}}$.
		\end{proof}
	
	Now, according to Theorem \ref{thmConnectedofM2}, we define the \textbf{bicomplex Multibrot} set of order $p$, denoted by $\cM_2^p$ for a fixed integer $p \geq 2$, as the set
		\begin{align*}
		\cM_2^p & := \oa c \in \mB \mC \, : \, \cK_{2,c}^p \text{ is connected} \fa \\
		&= \oa c \in \mB \mC \, : \, \oa f_c^m (0) \fa_{m = 1}^{\infty} \text{ is bounded} \fa \text{.}
		\end{align*}
	In \cite{RochonParise, Wang}, it is proved that the set $\cM_2^p$ of order $p \geq 2$ has the following properties:
	\begin{enumerate}
		\item $\cM_2^p = \cM^p \times_{\id{1}} \cM^p$;
		\item $\cM_2^p \subset \overline{D_2} (0, 2^{1/(p-1)} )$;
		\item $c \in \cM_2^p$ if and only if $\Vert f_c^m (0) \Vert_2 \leq 2^{1/(p-1)}$ $\forall m \geq 1$;
		\item $\cM_2^p$ is a compact and connected subset of $\mB \mC$.
	\end{enumerate}
	
	Finally, let $c \in \mT \mC$, and $p \geq 2$ be an integer. The \textbf{tricomplex filled-in Julia set} of order $p$ is defined as 
	\begin{align*}
		\cK_{3,c}^p := \oa \eta \in \mT \mC \, : \, \oa f_c^m (\eta ) \fa_{m = 1}^{\infty} \text{ is bounded} \fa \text{.}
	\end{align*}
	
	The set $\cK_{3,c}^p$ has the following useful properties.
		\begin{theorem}
		Let $p \geq 2$ be an integer, and $c \in \TC$ with $c = \sum_{\gamma \in S_{1,3}} c_{\gamma} \gamma$. Then
			\begin{enumerate}
			\item $\cK_{3,c}^p = \op \cK_{c_{\id{1}\id{3}}}^p \times_{\id{1}} \cK_{c_{\ic{1}\id{3}}}^p \fp \times_{\id{3}} \op \cK_{c_{\id{1}\ic{3}}}^p \times_{\id{1}} \cK_{c_{\ic{1}\ic{3}}}^p \fp$;
			\item $\cK_{3,c}^p \subset \overline{D_2} (0, R_{\id{1}\id{3}}, R_{\ic{1}\id{3}} ) \times_{\id{3}} \overline{D_2}(0, R_{\id{1}\ic{3}}, R_{\ic{1}\ic{3}})$ where \\$R_{\gamma} := \max \oa |c_{\gamma}|, 2^{1/(p-1)} \fa$ for $\gamma \in S_{1,3}$;
			\item $\cK_{3,c}^p$ is a compact subset of $\mT \mC$.
			\end{enumerate}
		\end{theorem}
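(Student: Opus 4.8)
The plan is to imitate the proof of Theorem~\ref{thmPropK2}, now decomposing over the four idempotent components in $S_{1,3}$ rather than the two in $S_1$. The cornerstone is the fact that, since the idempotent representation permits term-by-term addition and multiplication, the polynomial $f_c(\eta) = \eta^p + c$ acts componentwise on the coordinates of $\eta = \sum_{\gamma \in S_{1,3}} \eta_\gamma \gamma$ relative to $c = \sum_{\gamma \in S_{1,3}} c_\gamma \gamma$. Using that $\{\id{1}\id{3}, \ic{1}\id{3}, \id{1}\ic{3}, \ic{1}\ic{3}\}$ is a complete set of orthogonal idempotents (distinct elements of $S_{1,3}$ multiply to $0$ and each squares to itself), a straightforward induction on $m$ gives
\[
f_c^m(\eta) = \sum_{\gamma \in S_{1,3}} f_{c_\gamma}^m(\eta_\gamma)\,\gamma ,
\]
where each $f_{c_\gamma}^m$ is an ordinary complex iterate on $\mC(\bo)$. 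Establishing this identity is the first and central step.

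With it in hand, the second norm formula in \eqref{NormIdem} yields
\[
\Vert f_c^m(\eta) \Vert_3 = \tfrac{1}{2}\sqrt{\textstyle\sum_{\gamma \in S_{1,3}} \lvert f_{c_\gamma}^m(\eta_\gamma)\rvert^2},
\]
so the tricomplex orbit $\{f_c^m(\eta)\}_{m=1}^{\infty}$ is bounded if and only if all four complex orbits $\{f_{c_\gamma}^m(\eta_\gamma)\}_{m=1}^{\infty}$ are bounded. Reading this equivalence through the Cartesian-product homeomorphisms $\Gamma_2$ and $\Gamma_1$ gives statement~1. Statement~2 then follows immediately: each complex factor satisfies $\cK_{c_\gamma}^p \subset \overline{B_1}(0, R_\gamma)$ with $R_\gamma = \max\{\lvert c_\gamma\rvert, 2^{1/(p-1)}\}$, so the decomposition of statement~1 is contained in the corresponding nested product of closed balls, which is precisely $\overline{D_2}(0, R_{\id{1}\id{3}}, R_{\ic{1}\id{3}}) \times_{\id{3}} \overline{D_2}(0, R_{\id{1}\ic{3}}, R_{\ic{1}\ic{3}})$.

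For statement~3, I would present $\cK_{3,c}^p$ as the homeomorphic image, under the composition $\Gamma_2 \circ (\Gamma_1 \times \Gamma_1)$, of the Cartesian product of the four complex sets $\cK_{c_\gamma}^p$. Each of these factors is compact (closed by property~3 and bounded by property~1 of the complex filled-in Julia sets), a finite product of compacta is compact, and compactness is preserved by a homeomorphism; hence $\cK_{3,c}^p$ is compact in $\mT\mC$.

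The main obstacle I anticipate is bookkeeping rather than conceptual: carefully justifying the componentwise iterate identity over four idempotents at once, and checking that $\Gamma_1$ and $\Gamma_2$ compose so that the nested products match the norm decomposition. To sidestep the four-fold induction, one can instead proceed in two stages—first split $\cK_{3,c}^p = \cK_{2,c_{\id{3}}}^p \times_{\id{3}} \cK_{2,c_{\ic{3}}}^p$ using the first equality in \eqref{NormIdem}, then apply Theorem~\ref{thmPropK2} to each bicomplex factor—which is likely the cleaner route to write out in full.
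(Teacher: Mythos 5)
Your proposal is correct and takes essentially the same route as the paper, whose proof is a one-line remark that the properties follow from the idempotent representation of the iterates of $f_c$ and that the argument mirrors Theorem~\ref{thmPropK2}; the componentwise iterate identity, the norm decomposition from \eqref{NormIdem}, and the homeomorphisms $\Gamma_1$ and $\Gamma_2$ are exactly the ingredients being invoked. Your two-stage variant (split over $\id{3}$ first, then apply Theorem~\ref{thmPropK2} to each bicomplex factor) is a clean way of writing out that same argument in full.
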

		\begin{proof}
		These properties are a consequence of the idempotent representation of the iterates of $f_c$. The complete proof is similar to the one of Theorem \ref{thmPropK2}.
		\end{proof}
	Similarly to the set $\cK_{2,c}^p$, we have the following characterization.
		\begin{theorem}
		$\cK_{3,c}^p$ is connected if and only if $0 \in \cK_{3,c}^p$.
		\end{theorem}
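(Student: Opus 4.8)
The plan is to mirror the argument used for the bicomplex set in Theorem \ref{thmConnectedofM2}, but now invoking the bicomplex result itself as the base case instead of the complex one. Writing $c = c_{\id{3}} \id{3} + c_{\ic{3}} \ic{3}$ with $c_{\id{3}}, c_{\ic{3}} \in \mB\mC$, the first property of the preceding theorem regroups into $\cK_{3,c}^p = \cK_{2, c_{\id{3}}}^p \times_{\id{3}} \cK_{2, c_{\ic{3}}}^p$, where $\cK_{2, c_{\id{3}}}^p = \cK_{c_{\id{1}\id{3}}}^p \times_{\id{1}} \cK_{c_{\ic{1}\id{3}}}^p$ and similarly for $\cK_{2, c_{\ic{3}}}^p$, by Theorem \ref{thmPropK2}. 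This reduces the tricomplex claim to a statement about a single $\times_{\id{3}}$-product of two bicomplex filled-in Julia sets.

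First I would exploit the homeomorphism $\Gamma_2$ introduced in section \ref{eqGamma2}, which identifies the ordinary Cartesian product $\cK_{2, c_{\id{3}}}^p \times \cK_{2, c_{\ic{3}}}^p$ with $\cK_{3,c}^p$. Consequently $\cK_{3,c}^p$ is connected if and only if the topological product $\cK_{2, c_{\id{3}}}^p \times \cK_{2, c_{\ic{3}}}^p$ is connected. Since both factors are nonempty compact sets, the standard fact that a product of topological spaces is connected precisely when each factor is connected shows that $\cK_{3,c}^p$ is connected if and only if both $\cK_{2, c_{\id{3}}}^p$ and $\cK_{2, c_{\ic{3}}}^p$ are connected. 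Applying Theorem \ref{thmConnectedofM2} to each bicomplex factor, this is in turn equivalent to $0 \in \cK_{2, c_{\id{3}}}^p$ and $0 \in \cK_{2, c_{\ic{3}}}^p$.

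To close the loop, I would note that the tricomplex zero decomposes as $0 = 0 \cdot \id{3} + 0 \cdot \ic{3}$, so under $\Gamma_2$ the point $0 \in \cK_{3,c}^p$ corresponds exactly to the pair $(0,0)$. Hence $0 \in \cK_{3,c}^p$ if and only if $0 \in \cK_{2, c_{\id{3}}}^p$ and $0 \in \cK_{2, c_{\ic{3}}}^p$. Chaining the three equivalences then yields the stated characterization.

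The only genuinely delicate point is the nonemptiness hypothesis required for the product-connectedness equivalence in the forward direction, namely that a product is connected if and only if \emph{each nonempty} factor is. This is not a real obstacle here, since every complex filled-in Julia set $\cK_{c_\gamma}^p$ is a nonempty compact set (it contains the fixed points of $f_{c_\gamma}$), so the two bicomplex factors are nonempty and compact as well. Alternatively, one can sidestep the issue entirely by observing that whenever $0 \in \cK_{3,c}^p$ the relevant factors automatically contain $0$ and are therefore nonempty, which is enough to run the equivalence in both directions.
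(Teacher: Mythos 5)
Your proof is correct and follows essentially the same route as the paper, which for $p>2$ simply says the argument is ``similar to the proof of Theorem \ref{thmConnectedofM2}'' --- i.e., precisely your reduction via the homeomorphism $\Gamma_2$ and the fact that a product is connected if and only if its factors are, with Theorem \ref{thmConnectedofM2} handling the bicomplex factors. Your explicit attention to the nonemptiness hypothesis (via fixed points of $f_c$) is a detail the paper leaves implicit, but it does not change the approach.
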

		
		\begin{proof}
		The case $p = 2$ is shown in \cite{GarantRochon}. Furthermore, when $p > 2$, the approach is similar to the proof of Theorem \ref{thmConnectedofM2}.
		\end{proof}
	
	Following that, we define a \textbf{tricomplex Multibrot} set $\cM_3^p$ of order $p \geq 2$ as the set 
		\begin{align*}
		\cM_3^p := \oa c \in \mT \mC \, : \, \oa f_c^m (0) \fa_{m = 1}^{\infty} \text{ is bounded} \fa \text{.}
		\end{align*}
	Similarly, we have the following basic properties: 
	\begin{enumerate}
		\item $\cM_3^p = \cM_2^p \times_{\id{3}} \cM_2^p$;
		\item $\cM_3^p \subset \overline{D_2}(0, 2^{1/(p-1)}) \times_{\id{3}} \overline{D_2} (0, 2^{1/(p-1)})$;
		\item $c \in \cM_3^p$ if and only if $\Vert f_c^m(0) \Vert_3 \leq 2^{1/(p-1)}$ $\forall m \geq 1$;
		\item $\cM_3^p$ is a compact and connected subset of $\mT \mC$.
	\end{enumerate}
	The proofs of 1 and 2 as well as the proof of the connectedness of $\cM_3^p$ can be found in \cite{Parise, RochonParise}. The third property is a direct consequence of the second. Furthermore, the application $\Gamma_2$ introduced in section \ref{eqGamma2} forms a homeomorphism between $\cM_2^p \times \cM_2^p$ and $\cM_3^p$. Therefore, since $\cM_2^p \times \cM_2^p$ is a compact set, it can be concluded that $\cM_3^p$ is also compact.
	
\section{Distance Estimator}
	It is well-known (see, for example, \cite{Dang} or \cite{Peitgen}) that the distance from a point in the complement of the classical filled-in Julia sets $\cK_{c}^2$ or in the complement of the classical Mandelbrot set $\cM^2$ to the set itself can be estimated. Our purpose is now to extend that result to the sets $\cK_{c}^p$ and $\cM^p$ for any integer $p \geq 2$. Moreover, we will extend the results from É. Martineau and D. Rochon \cite{RochonMartineau} to the tricomplex filled-in Julia sets and Multibrots.
	
	\subsection{Green's functions}
	
	In J. Milnor's book \cite{Milnor2}, it is showed that for any complex-valued polynomial $g(z) = \sum_{k = 1}^n a_k z^k$ where $a_n = 1$ and $n \geq 2$, there exists a unique biholomorphic function (called the Böttcher coordinates) $\phi$ which conjugates $g$ to the application $z \mapsto z^n$ on a certain neighborhood of infinity. More precisely, $\phi$ has the following properties:
		\begin{enumerate}
		\item $\phi (z) \sim z$ as $|z| \ra \infty$; 
		\item $\phi (g(z)) = (\phi (z))^p$;
		\item $\phi (z) = \displaystyle \lim_{m \ra \infty} \oc g^m (z)\fc^{1/p^m}$.
		\end{enumerate}
	Moreover, if the filled-in Julia set associated to $g$ is connected (that is, the filled-in Julia set contains every zeros of $g$ \cite{Milnor2}), then $\phi$ extends to a conformal isomorphism from the complement of the filled-in Julia set to the complement of the closed unit disk. Then, for the polynomial $f_c(z) = z^p + c$ (with $p \geq 2$), there exists a biholomorphic function $\phi_c : \mC (\bo ) \backslash \cK_{c}^p \ra \mC (\bo ) \backslash \overline{B_1} (0, 1)$ where $\cK_{c}^p$ is a connected set.
	
	In fact, A. F. Beardon \cite{Beardon} proved that there exists a biholomorphic mapping $\psi:\mC(\im{1})\backslash\cM^2\rightarrow\mC(\im{1})\backslash\overline{B_1}(0,1)$, used by Douady and Hubbard in some of their work \cite{Douady}, with the same properties as $\phi_c$ which conjugates the application $c \mapsto c^2 + c$ to $c \mapsto c^2$. Moreover, the proof can be easily adapted for the Multibrot sets $\cM^p$ of the complex plane. Thus, we can say that there exists a biholomorphic mapping $\psi:\mC(\im{1})\backslash\cM^p\rightarrow\mC(\im{1})\backslash\overline{B_1}(0,1)$ which has the same properties as $\phi_c$ which conjugates $c \mapsto f_c(c) = f_c^2(0)$ to $c \mapsto c^p$. Loosely speaking, $\psi$ is defined as $\psi (c) := \phi_c(c)$ on $\mC (\bo ) \backslash \cM^p$ for any integer $p \geq 2$.
	
	In the complex plane, with the function $\phi_c : \mC (\bo ) \backslash \cK_c^p \ra \mC (\bo ) \backslash \overline{B_1}(0,1)$, we define the \textbf{Green's function} of the filled-in Julia set $\cK_{c}^p$ as
		\begin{align*}
		G(z) := \oa 
			\begin{matrix}
			0 & \text{ if } z \in \cK_c^p \\
			\ln |\phi_c(z) | & \text{ if } z \not \in \cK_c^p \text{.}
			\end{matrix} \right.
		\end{align*}
	Using the properties of the function $\phi_c$ (see \cite{Milnor2}), it is easy to show that: 
		\begin{enumerate}
			\item $G$ is harmonic, meaning that
			\[\frac{\partial^2 G}{\partial x^2} + \frac{\partial^2 G}{\partial y^2} = 0\quad\text{where } z = x + y\im{1};\]
			\item $G(z) = \displaystyle\lim_{m \ra \infty} \frac{\ln |f_c^m (z)|}{p^m}$ on $\mC (\bo ) \backslash \cK_c^p$ (the convergence is uniform on the compact subsets);
			\item $G(z) \ra 0$ as $z \ra \partial \cK_c^p$;
			\item $G(z) \sim \ln |z|$ as $|z| \ra \infty$.
		\end{enumerate}
	Similarly, with the application $\psi : \mC (\bo ) \backslash \cM^p \ra \mC (\bo ) \backslash \overline{B_1}(0,1)$, we define the Green's function of the multribrot set $\cM^p$ as
		\begin{align*}
		G(c) := \oa 
			\begin{matrix}
			0 & \text{ if } c \in \cM^p \\
			\ln |\psi (c) | & \text{ if } c \not \in \cM^p \text{.}
			\end{matrix} \right.
		\end{align*}
		
		Moreover, in each case, we define the derivative of $G$ as follows:
		\begin{align*}
			G'(z) = \left(\frac{\partial G}{\partial x}, \frac{\partial G}{\partial y}\right).
		\end{align*}
		This definition of $G'$, viewed as the derivative of a mapping of two real variables, will prove itself useful in subsection \ref{FundamentalComplex}.

	\subsection{Fundamental results in the complex plane}\label{FundamentalComplex}
	
	\begin{theorem}[Koebe's 1/4] (See \cite{Conway, Dang}.)
	Let $f$ be an univalent and analytic function on the open disc $B:=B_1 (0,1)$ such that $f(0) = z_0$ and $|f'(0)| = R$. Then, $B_1 (z_0,\frac{R}{4} ) \subset f (B)$.
	\end{theorem}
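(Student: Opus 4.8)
The plan is to reduce the statement to the normalized setting of the \emph{schlicht class} $S$ and then invoke two classical ingredients: the second-coefficient bound $|a_2|\le 2$ and a Möbius trick that converts an ``omitted value'' into a coefficient inequality.

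First I would normalize. Set
\begin{align*}
g(z) := \frac{f(z) - z_0}{f'(0)},
\end{align*}
so that $g$ is univalent and analytic on $B$ with $g(0)=0$ and $g'(0)=1$; thus $g(z) = z + a_2 z^2 + a_3 z^3 + \cdots$ belongs to $S$. Since $f(z) = z_0 + f'(0)\,g(z)$, we have $f(B) = z_0 + f'(0)\,g(B)$, and because $|f'(0)| = R$, the inclusion $B_1(0,\tfrac14)\subset g(B)$ immediately yields $B_1(z_0,\tfrac{R}{4})\subset f(B)$, which is the desired conclusion. Hence it suffices to prove the theorem for $g\in S$ with $z_0 = 0$ and $R = 1$.

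Next I would establish the bound $|a_2|\le 2$ for every $g\in S$, which is where the real work lies. I would proceed through Gronwall's area theorem: if $h(z) = z + b_0 + \sum_{n\ge 1} b_n z^{-n}$ is univalent on the exterior $\{|z|>1\}$, then $\sum_{n\ge 1} n|b_n|^2 \le 1$, which follows because the area of the omitted region equals $\pi\bigl(1 - \sum_{n\ge1} n|b_n|^2\bigr)$ and must be nonnegative. Applying this to the odd square-root transform $\phi(z) := \sqrt{g(z^2)} = z + \tfrac{a_2}{2}z^3 + \cdots$ (well defined and univalent because $g$ omits $0$ on $B\setminus\{0\}$) and then passing to $1/\phi(1/z)$, which lies in $\Sigma$ with leading Laurent coefficient $-\tfrac{a_2}{2}$, forces $|a_2|^2/4\le 1$, i.e. $|a_2|\le 2$. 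This square-root transformation together with the area theorem is the main obstacle, since everything else is elementary once $|a_2|\le 2$ is in hand.

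Finally I would deduce the $1/4$ bound by a Möbius argument. Suppose $w_0\notin g(B)$. Then
\begin{align*}
\tilde g(z) := \frac{w_0\, g(z)}{w_0 - g(z)}
\end{align*}
is again in $S$, being the composition of $g$ with the Möbius map $w\mapsto w_0 w/(w_0-w)$, which is injective on the value set $g(B)$ since the latter avoids $w_0$. A short expansion gives $\tilde g(z) = z + (a_2 + w_0^{-1})z^2 + \cdots$, so applying $|a_2|\le 2$ to both $g$ and $\tilde g$ yields
\begin{align*}
\left| w_0^{-1} \right| = \left| (a_2 + w_0^{-1}) - a_2 \right| \le \left| a_2 + w_0^{-1} \right| + |a_2| \le 4,
\end{align*}
hence $|w_0|\ge \tfrac14$. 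Thus every omitted value lies outside $B_1(0,\tfrac14)$, i.e. $B_1(0,\tfrac14)\subset g(B)$, which by the reduction of the first paragraph completes the proof.
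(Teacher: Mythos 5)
Your proof is correct and is precisely the classical argument (normalization to the schlicht class $S$, Bieberbach's bound $|a_2|\le 2$ via the area theorem and the odd square-root transform, then the Möbius omitted-value trick) found in the references the paper cites; the paper itself states Koebe's theorem without proof, deferring to \cite{Conway, Dang}. The only step left tacit is that $f'(0)\neq 0$, which is immediate since a univalent analytic function has nonvanishing derivative, so the normalization $g(z) = (f(z)-z_0)/f'(0)$ is legitimate.
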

	
	This classical result leads to the following theorem:
	\begin{theorem}\label{ComplexJBoundsDist}
	Let $\cK_{c}^p$ be connected and $z_0 \in \mC(\im{1})\backslash\cK_c^p$. Define $d(z_0, \cK_{c}^p) := \inf\oa |z - z_0|\, : \, z \in \cK_c^p \fa$. Then,
		\begin{align*}
		\frac{\sinh (G(z_0))}{2e^{G(z_0)}|G'(z_0)|} < d(z_0, \cK_c^p ) < \frac{2 \sinh (G(z_0))}{|G'(z_0)|}
		\end{align*}	
	where $G$ is the Green's function of the set $\cK_c^p$.
	\end{theorem}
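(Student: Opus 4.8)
The plan is to route everything through the Böttcher coordinate $\phi_c : \mC(\bo)\setminus\cK_c^p \to \Omega$, where $\Omega := \mC(\bo)\setminus\overline{B_1}(0,1)$; this is a conformal isomorphism precisely because $\cK_c^p$ is connected. Write $w_0 := \phi_c(z_0)$ and $\rho := |w_0| = e^{G(z_0)} > 1$ (the inequality holds since $z_0 \notin \cK_c^p$). The first step is the bookkeeping identity linking the three quantities in the statement: because $G = \ln|\phi_c| = \pre(\ln\phi_c)$ with $\ln\phi_c$ holomorphic, the Cauchy--Riemann equations give $|G'(z_0)| = |(\ln\phi_c)'(z_0)| = |\phi_c'(z_0)|/|\phi_c(z_0)|$, hence the relation $|\phi_c'(z_0)| = \rho\,|G'(z_0)|$. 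Everything then reduces to bracketing $d := d(z_0,\cK_c^p)$ between multiples of $|\phi_c'(z_0)|$, after which one rewrites $(\rho^2-1)/\rho = 2\sinh G(z_0)$ and $(\rho^2-1)/\rho^2 = 2\sinh(G(z_0))/e^{G(z_0)}$.

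For the upper bound, observe that the open ball $B_1(z_0,d)$ lies in $\mC(\bo)\setminus\cK_c^p$, so $\phi_c$ is univalent there and maps it into $\Omega$, i.e. $|\phi_c| > 1$ throughout. I would then consider $V(\zeta) := 1/\phi_c(z_0 + d\zeta)$ on $B_1(0,1)$, a holomorphic self-map of the unit disc with $|V(0)| = 1/\rho$ and $|V'(0)| = d\,|\phi_c'(z_0)|/\rho^2$. The Schwarz--Pick lemma at the origin gives $|V'(0)| \le 1 - |V(0)|^2 = 1 - \rho^{-2}$, which rearranges to $d\,|\phi_c'(z_0)| \le \rho^2 - 1$, that is $d \le (\rho^2-1)/(\rho\,|G'(z_0)|) = 2\sinh(G(z_0))/|G'(z_0)|$. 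Strictness is automatic, since $\phi_c$ does not carry $B_1(z_0,d)$ onto all of $\Omega$ and the hyperbolic contraction is therefore strict.

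For the lower bound I would instead push the unit disc forward through $F := \phi_c^{-1}$ and invoke Koebe's $1/4$ theorem. Let $\mu : B_1(0,1) \to \Omega\cup\{\infty\}$ be a conformal isomorphism with $\mu(0) = w_0$; an explicit Möbius construction (compose the inversion $w\mapsto 1/w$ with a disc automorphism sending $0$ to $1/w_0$) yields $|\mu'(0)| = \rho^2-1$ and shows that the unique preimage of $\infty$ lies on the circle of radius $1/\rho$. Restricting to $B_1(0,1/\rho)$, the composition $g := F\circ\mu$ is analytic and univalent into $\mC(\bo)\setminus\cK_c^p$ with $g(0) = z_0$ and $|g'(0)| = |F'(w_0)|\,|\mu'(0)| = (\rho^2-1)/(\rho\,|G'(z_0)|)$. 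Rescaling $B_1(0,1/\rho)$ to unit size and applying Koebe's $1/4$ theorem places a ball $B_1\!\left(z_0,\tfrac{1}{4\rho}|g'(0)|\right) = B_1\!\left(z_0,\tfrac{\rho^2-1}{4\rho^2|G'(z_0)|}\right)$ inside $\mC(\bo)\setminus\cK_c^p$, forcing $d \ge (\rho^2-1)/(4\rho^2|G'(z_0)|) = \sinh(G(z_0))/(2e^{G(z_0)}|G'(z_0)|)$, again strictly since $g$ is not a rotation of the Koebe function.

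The main obstacle is securing the sharp constant in the lower bound. The naive choice — the largest Euclidean disc $B_1(w_0,\rho-1)$ inscribed in $\Omega$ — gives only $d \ge (\rho-1)/(4\rho|G'(z_0)|)$, which is strictly weaker than the stated estimate; replacing the numerator $\rho(\rho-1)$ by $\rho^2-1$ demands the hyperbolically correct disc supplied by $\mu$, together with the computation that $\infty$ is pulled back to radius exactly $1/\rho$, so that Koebe's theorem must be applied on $B_1(0,1/\rho)$ rather than on all of $B_1(0,1)$. The remaining ingredients are routine verifications: the Cauchy--Riemann identity $|\phi_c'(z_0)| = \rho\,|G'(z_0)|$, the derivative formula $|F'(w_0)| = 1/|\phi_c'(z_0)|$, and the two Möbius computations $|\mu'(0)| = \rho^2-1$ and $|\mu^{-1}(\infty)| = 1/\rho$.
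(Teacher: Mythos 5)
Your proposal is correct and follows essentially the same route as the paper: your upper bound via the Schwarz--Pick lemma applied to $\zeta \mapsto 1/\phi_c(z_0 + d\zeta)$ is the paper's Schwarz-lemma argument for $F \circ W$ with $W = 1/\phi_c$ in different notation, and your map $g = \phi_c^{-1} \circ \mu$ restricted to $B_1(0, 1/\rho)$ and rescaled is exactly the paper's $\omega(z) = (F \circ W)^{-1}(|w_0| z)$, to which Koebe's $1/4$ theorem is likewise applied. The only differences are cosmetic: you make explicit the pullback of $\infty$ to the circle of radius $1/\rho$ and the strictness of both inequalities, details the paper leaves implicit behind ``it can be shown.''
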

	\begin{proof}
	The proof presented in \cite{Dang, Martineau} for $p = 2$ remains valid when $p > 2$. Thus, we present here a summarized version of this proof.
		
	We start with the upper bound. Let $W : \mC (\bo ) \backslash \cK_{c}^p \ra \mC (\bo )$ be defined as $W(z) := \frac{1}{\phi_c(z)}$ with $z \in \mC (\bo ) \backslash \cK_c^p$. It is clear that $|W(z)| < 1$ $\forall z \in \mC (\bo ) \backslash \cK_c^p$. Let $F (w) := \frac{w - w_0}{1 - w \overline{w_0}}$ where $w_0 := W(z_0)$. Then, $F$ is a Möbius transformation since $1 - w_0 \overline{w_0} \neq 0$. Its derivative is
		\begin{align*}
		F'(w) = \frac{1 - |w_0|^2}{(1 - w\overline{w_0})^2}
		\end{align*}
	and so $F'(w_0) = \frac{1}{1 - |w_0|^2} >0$ since $W (z_0) \in B_1 (0, 1)$. Moreover, a little computation shows that $|w| < 1$ if and only if $|F(w)| < 1$. Thus, we have that $\left(F \circ W\right) (B_1 (z_0, r)) \subset B_1 (0,1)$ for any ball $B_1 (z_0 , r) \subset \mC (\bo ) \backslash \cK_c^p$.
	
	Now, let $r_0 := d(z_0, \cK_c^p)$, and from the Schwarz's Lemma \cite{Rudin}, we have 
		\begin{align*}
		|(F\circ W)'(z_0)| < \frac{1}{r_0}
		\end{align*}
	which implies that
		\begin{align*}
		r_0 < \frac{1 - w_0 \overline{w_0}}{|W'(z_0)|} \text{.}
		\end{align*}
	It can be shown that $|W'(z_0)| = e^{-G(z_0)}|G'(z_0)|$ and 
		\begin{align*}
		1 - w_0 \overline{w_0} = e^{-G(z_0)}2 \sinh (G(z_0))\text{.}
		\end{align*}
	Therefore,
		\begin{align*}
		d(z_0, \cK_c^p) = r_0 < \frac{1 - w_0 \overline{w_0}}{|W'(z_0)|} = \frac{2\sinh (G(z_0))}{|G'(z_0)|} \text{.}
		\end{align*}
		
		Now, for the lower bound, in addition to the previous notations, we define another transformation $\omega$ as
			\begin{align*}
			\omega (z) := (F \circ W)^{-1} (|w_0| z) \text{.}
			\end{align*}
		This function is univalent on $B_1(0,1)$ and it can be computed that $\omega(0) = z_0$. Thus, from the Koebe 1/4 Theorem, it follows that
			\begin{align*}
			B_1 \left(z_0, \frac{R}{4} \right) \subset \omega (B_1 (0,1)) \subset \mC (\bo ) \backslash \cK_c^p
			\end{align*}
		where $R := |\omega'(0)|$. It can be shown that
		\[R = \frac{2\sinh (G(z_0))}{e^{G(z_0)}|G'(z_0)|},\] and so
			\begin{align*}
			d(z_0, \cK_c^p) = r_0 > \frac{R}{4} = \frac{\sinh (G(z_0))}{2e^{G(z_0)}|G'(z_0)|}.
			\end{align*}
	\end{proof}
	
	\begin{lemma}\label{Gapprox}
	Let $G$ be the Green's function defined on the set $\mC(\im{1})\backslash\cK_c^p$. We have, for a sufficiently large $m$,
		\begin{align*}
		G(z)\approx \frac{\ln|z_m|}{p^m} \quad\text{and}\quad|G'(z)| \approx \frac{|z_m'|}{p^m |z_m|}
		\end{align*}
	where $z_m := f_c^m(z)$, and $z_m' := \frac{d}{dz} (f_c^m (z))$.
	\end{lemma}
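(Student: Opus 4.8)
The plan is to handle the two approximations separately: the estimate for $G$ is essentially a restatement of the limit characterisation of the Green's function, while the estimate for $|G'|$ requires a logarithmic-derivative computation combined with the behaviour of the Böttcher coordinate $\phi_c$ near infinity.

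For the first approximation, I would simply invoke property 2 of the Green's function, namely that $G(z) = \lim_{m\ra\infty}\frac{\ln|f_c^m(z)|}{p^m}$ uniformly on the compact subsets of $\mC(\bo)\backslash\cK_c^p$. Writing $z_m := f_c^m(z)$, this says exactly that $\frac{\ln|z_m|}{p^m}\ra G(z)$, so for $m$ large enough we have $G(z)\approx\frac{\ln|z_m|}{p^m}$. No further work is needed here.

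For the estimate on $|G'|$, the first step is to express $|G'|$ through $\phi_c$. Since $\phi_c$ maps into $\mC(\bo)\backslash\overline{B_1}(0,1)$ we have $|\phi_c(z)|>1$, so a local holomorphic branch of $\log\phi_c$ exists and $G=\ln|\phi_c|=\mathrm{Re}(\log\phi_c)$. For any holomorphic $g=G+\bo H$ the Cauchy-Riemann equations give $|g'|=\sqrt{(\partial G/\partial x)^2+(\partial G/\partial y)^2}=|G'|$; applying this to $g=\log\phi_c$ yields
\[
|G'(z)| = \left|\frac{\phi_c'(z)}{\phi_c(z)}\right|.
\]
The key step is then to iterate property 2 of $\phi_c$ into the functional equation $\phi_c(f_c^m(z))=(\phi_c(z))^{p^m}$ and take its logarithmic derivative in $z$, which gives
\[
\frac{\phi_c'(z_m)}{\phi_c(z_m)}\,z_m' = p^m\,\frac{\phi_c'(z)}{\phi_c(z)},
\qquad\text{so}\qquad
|G'(z)| = \frac{|z_m'|}{p^m}\left|\frac{\phi_c'(z_m)}{\phi_c(z_m)}\right|,
\]
where $z_m':=\frac{d}{dz}(f_c^m(z))$. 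To finish, I would use that $z\notin\cK_c^p$ forces the orbit to escape, so $|z_m|\ra\infty$; since $\phi_c(w)\sim w$ as $|w|\ra\infty$ (property 1 of $\phi_c$), the Böttcher coordinate admits near infinity an expansion $\log\phi_c(w)=\ln w + O(1/w)$, whence $w\,\phi_c'(w)/\phi_c(w)\ra 1$. Substituting $\left|\phi_c'(z_m)/\phi_c(z_m)\right|\approx 1/|z_m|$ then gives $|G'(z)|\approx\frac{|z_m'|}{p^m|z_m|}$, as claimed.

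The main obstacle is precisely this last step: justifying $w\,\phi_c'(w)/\phi_c(w)\ra 1$ rigorously. This rests on knowing not merely that $\phi_c(w)\sim w$ but that $\phi_c$ extends holomorphically at infinity in the coordinate $1/w$ with derivative tending to $1$, so that the asymptotic relation may be differentiated term by term. This regularity is part of the standard construction of the Böttcher coordinates (see \cite{Milnor2}), and everything else in the argument is a routine chain-rule and Cauchy-Riemann computation.
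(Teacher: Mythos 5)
Your proposal is correct, but it takes a genuinely different route from the paper's proof for the gradient estimate. The paper differentiates the limit formula directly: it replaces $G$ by the approximant $\ln|z_m|/p^m$, computes the gradient of that approximant, and reduces everything to the identity $\left||f(z)|'\right| = |f'(z)|$ for holomorphic $f$ (citing \cite{Martineau}); this is short, but it tacitly interchanges differentiation with the limit defining $G$ --- legitimate here because the approximants are harmonic and converge locally uniformly, so their gradients converge as well, a point the paper does not spell out. You instead stay inside the Böttcher framework: from $G = \mathrm{Re}\log\phi_c$ and the Cauchy--Riemann equations you obtain the exact identity $|G'(z)| = \left|\phi_c'(z)/\phi_c(z)\right|$, then the iterated functional equation $\phi_c(f_c^m(z)) = (\phi_c(z))^{p^m}$ and its logarithmic derivative give the exact relation $|G'(z)| = \frac{|z_m'|}{p^m}\left|\frac{\phi_c'(z_m)}{\phi_c(z_m)}\right|$, so the only approximation enters at the last step through $w\,\phi_c'(w)/\phi_c(w)\to 1$ as $|w|\to\infty$. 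What your approach buys is an exact formula with a single multiplicative error factor of size $1+O(1/|z_m|)$, explicitly controlled because $|z_m|\to\infty$ for $z\notin\cK_c^p$; and the obstacle you flag at the end is genuinely covered by the standard theory, since the Böttcher coordinate is holomorphic at infinity in the chart $1/w$ and tangent to the identity there (see \cite{Milnor2}), so the asymptotic $\phi_c(w)\sim w$ may indeed be differentiated. Your argument thereby avoids the limit/derivative interchange entirely, at the cost of invoking more of the analytic structure of $\phi_c$; the paper's computation, by contrast, is purely real-variable once the limit formula for $G$ is granted, and is reused verbatim for the Multibrot case in Lemma \ref{GApproxMandel}, where your functional-equation argument would need the conjugacy for $\psi(c)=\phi_c(c)$ instead.
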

	\begin{proof}
	From the properties of $G(z)$, we know directly that
	\[G(z) = \lim_{m\rightarrow\infty}\frac{\ln|z_m|}{p^m},\]
	where the convergence is uniform on the compact subsets of $\mC(\im{1})\backslash\cK_c^p$.
	Hence, we can see that
		\begin{align*}
		|G'(z)| &\approx \left|\left(\frac{\partial}{\partial x}\left(\frac{\ln |z_m|}{p^m} \right), \frac{\partial}{\partial y}\left(\frac{\ln |z_m|}{p^m} \right)\right)\right|\\
		&= \left|\left(\frac{\frac{\partial}{\partial x}|z_m|}{p^m|z_m|}, \frac{\frac{\partial}{\partial y}|z_m|}{p^m|z_m|}\right)\right|\\
		&= \frac{1}{p^m|z_m|}\left|\left(\frac{\partial |z_m|}{\partial x}, \frac{\partial |z_m|}{\partial y}\right)\right|\\
		&= \frac{||z_m|'|}{p^m |z_m|} \\
		&= \frac{|z_m'|}{p^m |z_m|}
		\end{align*}
	since, for any holomorphic function $f$ on an open set $U \subset \mC (\bo )$, we have $|f'(z)| = ||f(z)|'|$ $\forall z \in U$ (see appendix B in \cite{Martineau}).
	\end{proof}
	
	This lemma leads to the following approximation theorem for the bounds of the distance $d(z_0, \cK_c^p)$.	
	\begin{theorem}\label{ComplexJDistEstimate}
	Let $z_0$ be a point in $\mC(\im{1})\backslash \cK_c^p$ close to the set $\cK_c^p$. The distance $d(z_0, \cK_c^p)$ from the point $z_0$ to the set $\cK_c^p$ can be approximated in the following way:
		\begin{align*}
		\frac{|z_m|\ln |z_m|}{2|z_m|^{1/p^m} |z_m'|} < d(z_0, \cK_c^p) < \frac{2|z_m|\ln |z_m|}{|z_m'|} \text{.}
		\end{align*}
	\end{theorem}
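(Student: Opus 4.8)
The plan is to substitute the asymptotic expressions from Lemma \ref{Gapprox} directly into the rigorous bounds established in Theorem \ref{ComplexJBoundsDist}. Since $z_0$ is taken close to $\cK_c^p$ and $G(z) \to 0$ as $z \to \partial \cK_c^p$, the value $G(z_0)$ is small, so I would first replace $\sinh(G(z_0))$ by $G(z_0)$ using the first-order approximation $\sinh(t) \approx t$ valid for small $t$.

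Next, I would invoke Lemma \ref{Gapprox} to write $G(z_0) \approx \frac{\ln|z_m|}{p^m}$ and $|G'(z_0)| \approx \frac{|z_m'|}{p^m|z_m|}$ for sufficiently large $m$, together with the consequence $e^{G(z_0)} \approx |z_m|^{1/p^m}$ obtained by exponentiating the first approximation. With these three substitutions in hand, both bounds reduce to algebraic simplifications. For the upper bound, plugging into $\frac{2\sinh(G(z_0))}{|G'(z_0)|}$ gives $\frac{2(\ln|z_m|/p^m)}{|z_m'|/(p^m|z_m|)}$; the factors of $p^m$ cancel, leaving $\frac{2|z_m|\ln|z_m|}{|z_m'|}$. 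For the lower bound, plugging into $\frac{\sinh(G(z_0))}{2e^{G(z_0)}|G'(z_0)|}$ yields $\frac{\ln|z_m|/p^m}{2|z_m|^{1/p^m}\cdot|z_m'|/(p^m|z_m|)}$, and again the $p^m$ factors cancel, producing $\frac{|z_m|\ln|z_m|}{2|z_m|^{1/p^m}|z_m'|}$.

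The computation itself is routine; the only genuine subtlety is the replacement $\sinh(G(z_0)) \approx G(z_0)$, which is justified precisely by the hypothesis that $z_0$ lies close to the set so that $G(z_0)$ is small. I would emphasize that this is exactly why the result is phrased as an approximation rather than a strict pair of inequalities: the error introduced by this linearization, together with the finite-$m$ truncation already present in Lemma \ref{Gapprox}, is absorbed into the ``$\approx$'' symbol. In short, the theorem is nothing more than Theorem \ref{ComplexJBoundsDist} rewritten in a form that can be evaluated numerically from the orbit $z_m = f_c^m(z_0)$ and its derivative, which is what makes it suitable for the ray-tracing application.
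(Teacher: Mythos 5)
Your proposal matches the paper's proof essentially step for step: both start from the rigorous bounds of Theorem \ref{ComplexJBoundsDist}, linearize $\sinh(G(z_0)) \approx G(z_0)$ using $G(z_0)\to 0$ near $\partial\cK_c^p$, and then substitute the approximations of Lemma \ref{Gapprox} (including $e^{G(z_0)} \approx |z_m|^{1/p^m}$) so that the $p^m$ factors cancel. The argument is correct and takes the same route as the paper.
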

	\begin{proof}
	It is known that $\sinh(x)\approx x$ when $x\ra 0$. Thus, from Lemma \ref{Gapprox} and since $G(z) \ra 0$ when $z \ra \partial \cK_c^p$, we have that the upper bound can be approximated by
		\begin{align*}
		\frac{2 \sinh (G(z_0))}{|G'(z_0)|} \approx \frac{2 |G(z_0)|}{|G'(z_0)|} \approx 2 \frac{\ln |z_m|}{p^m} \frac{p^m |z_m|}{|z_m'|} = \frac{2 |z_m| \ln |z_m|}{|z_m'|} \text{.}
		\end{align*}
	
	For the lower bound, we have that $e^{G(z_0)} \approx |z_m|^{1/p^m}$ when $m\ra \infty$, and so
		\begin{align*}
		\frac{\sinh (G(z_0))}{2e^{G(z_0)} |G'(z_0)|} &\approx \frac{G(z_0)}{2e^{G(z_0)} |G'(z_0)|} \approx \frac{|z_m|\ln |z_m|}{2|z_m|^{1/p^m}|z_m'|}
		\end{align*}
		for $m$ great enough.
	\end{proof}
	
	\begin{figure}
		\centering
		\subfigure[$\cK^2_{-1+0.2\im{1}}$]{\includegraphics[width=0.3\linewidth]{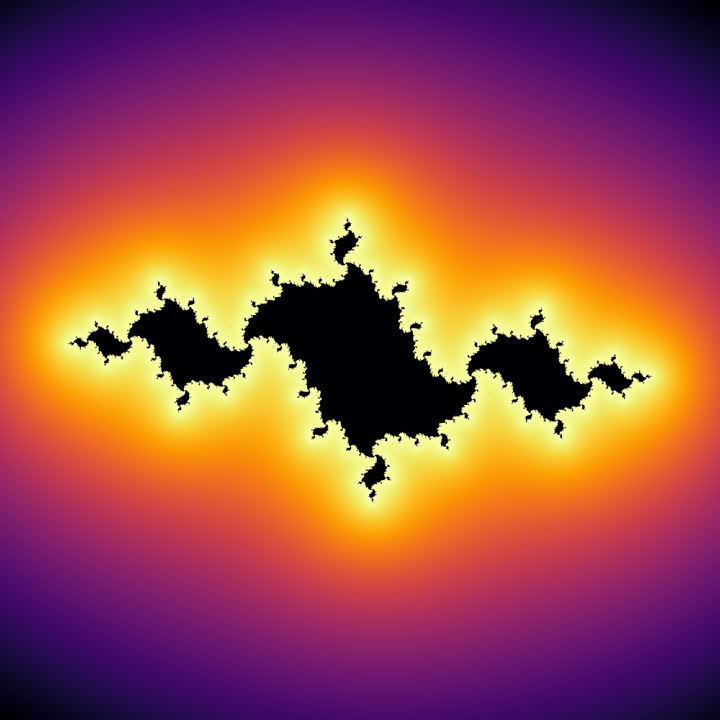}}
		\subfigure[$\cK^3_{0.1+0.8\im{1}}$]{\includegraphics[width=0.3\linewidth]{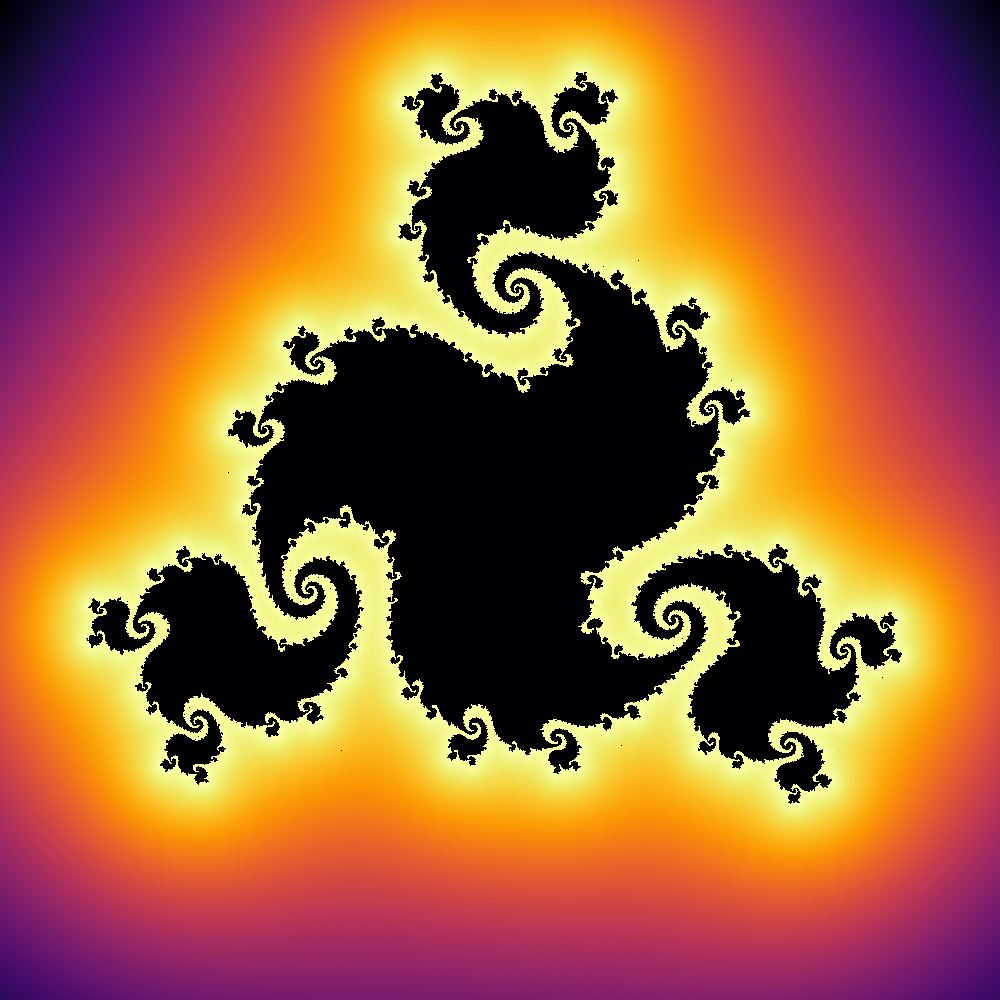}}
		\subfigure[$\cK^4_{-0.7+0.3\im{1}}$]{\includegraphics[width=0.3\linewidth]{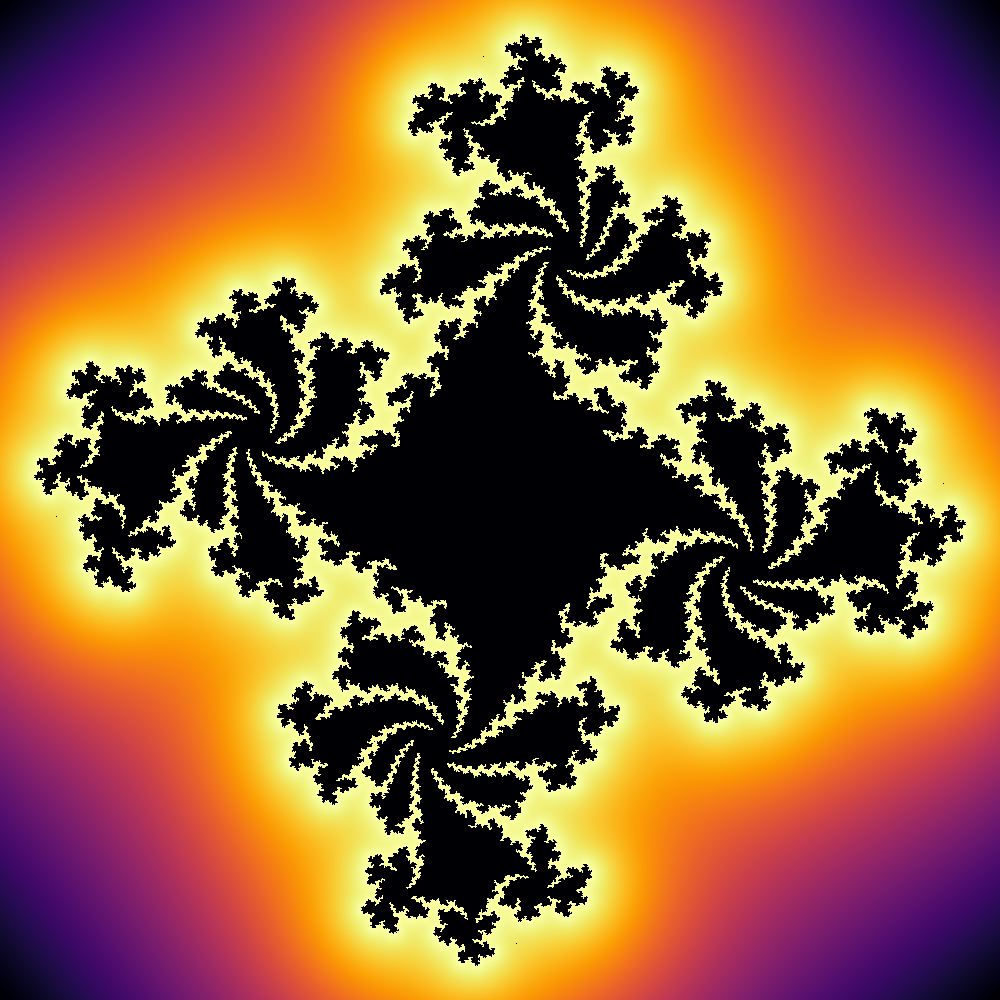}}
		\caption{Various filled-in Julia sets.}
	\end{figure}
	
	With similar arguments with $\psi(c):=\phi_c(c)$, we have the following theorem for the bounds of the distance from a point $c_0 \in \mC(\im{1})\backslash\cM^p$ to the set $\cM^p$.
		\begin{theorem}
		Let $c_0 \in \mC(\im{1})\backslash\cM^p$ and define 
			\begin{align*}
			d (c_0, \cM^p) := \inf \oa |c - c_0| \, : \, c \in \cM^p \fa\text{.}
			\end{align*}
		Then,
			\begin{align*}
			\frac{\sinh (G(c_0))}{2e^{G(c_0)}|G'(c_0)|} < d (c_0 , \cM^p ) < \frac{2\sinh (G(c_0))}{|G'(c_0)|}
			\end{align*}
		where $G$ is the Green's function of the set $\cM^p$.
		\end{theorem}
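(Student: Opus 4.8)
The plan is to mirror the proof of Theorem~\ref{ComplexJBoundsDist} almost verbatim, replacing the B\"ottcher coordinate $\phi_c$ of the filled-in Julia set by the map $\psi$ of the Multibrot set and the dynamical variable $z$ by the parameter $c$. The whole argument rests on the two facts recorded in the Green's function subsection: that $\psi : \mC (\bo) \backslash \cM^p \ra \mC (\bo) \backslash \overline{B_1}(0,1)$ is biholomorphic, and that the Green's function of $\cM^p$ satisfies $G(c) = \ln |\psi (c)|$ off $\cM^p$. These are exactly the two properties of $\phi_c$ and of $G$ that drove the previous proof.

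First I would set $W(c) := 1/\psi(c)$ on $\mC (\bo) \backslash \cM^p$. Since $\psi$ takes values in the complement of the closed unit disk, $|W(c)| < 1$ there, and because $G = \ln|\psi|$ we get $|W(c)| = e^{-G(c)}$. For the upper bound I would introduce the M\"obius transformation $F(w) := \frac{w - w_0}{1 - w\overline{w_0}}$ with $w_0 := W(c_0)$, observe that $|w|<1 \Leftrightarrow |F(w)|<1$, and conclude that $(F\circ W)(B_1(c_0,r)) \subset B_1(0,1)$ for every ball $B_1(c_0,r) \subset \mC (\bo)\backslash\cM^p$. Writing $r_0 := d(c_0,\cM^p)$ and applying Schwarz's Lemma gives $|(F\circ W)'(c_0)| < 1/r_0$, hence $r_0 < (1 - |w_0|^2)/|W'(c_0)|$.

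The two identities that convert this into the stated bound are $|W'(c_0)| = e^{-G(c_0)}|G'(c_0)|$ and $1 - |w_0|^2 = e^{-G(c_0)}\, 2\sinh(G(c_0))$. Both follow formally from $G = \ln|\psi|$ and $W = 1/\psi$: locally $G = \pre \log\psi$, so $|G'| = |\psi'/\psi|$, whereas $W' = -\psi'/\psi^2$ gives $|W'| = |\psi'|/|\psi|^2 = e^{-G}|G'|$; and $1 - |w_0|^2 = 1 - e^{-2G(c_0)} = e^{-G(c_0)}\,2\sinh(G(c_0))$. Substituting yields $d(c_0,\cM^p) < 2\sinh(G(c_0))/|G'(c_0)|$. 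For the lower bound I would set $\omega(c) := (F\circ W)^{-1}(|w_0| c)$, which is univalent on $B_1(0,1)$ with $\omega(0) = c_0$; the Koebe $1/4$ Theorem then places $B_1(c_0, R/4)$ inside $\omega(B_1(0,1)) \subset \mC (\bo)\backslash\cM^p$, where a direct computation gives $R = |\omega'(0)| = 2\sinh(G(c_0))/(e^{G(c_0)}|G'(c_0)|)$, whence $d(c_0,\cM^p) > R/4$.

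The argument introduces no genuinely new obstacle; the only point deserving care is that $\psi$ really does share all the analytic properties of $\phi_c$ invoked above — in particular that it is a conformal isomorphism onto $\mC (\bo)\backslash\overline{B_1}(0,1)$ and that $G = \ln|\psi|$ is harmonic with $G(c)\sim\ln|c|$ at infinity — since otherwise the identities for $|W'|$ and $1-|w_0|^2$ would fail. Granting the properties of $\psi$ established through Beardon's theorem in the preceding subsection, every line of the proof of Theorem~\ref{ComplexJBoundsDist} transfers unchanged.
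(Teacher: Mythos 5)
Your proposal is correct and takes essentially the same route as the paper, which proves this theorem only by the remark that the argument of Theorem~\ref{ComplexJBoundsDist} carries over ``with similar arguments with $\psi(c):=\phi_c(c)$'' --- precisely the substitution you carry out, with the map $W=1/\psi$, the M\"obius transformation $F$, Schwarz's Lemma for the upper bound, and Koebe's $1/4$ Theorem for the lower bound. The identities you verify, $|W'(c_0)| = e^{-G(c_0)}|G'(c_0)|$, $1-|w_0|^2 = 2e^{-G(c_0)}\sinh(G(c_0))$ and $R = 2\sinh(G(c_0))/\bigl(e^{G(c_0)}|G'(c_0)|\bigr)$, are exactly the ones needed and all check out.
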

	An approximation of those last bounds may be found explicitly. First, we need this next lemma.
		\begin{lemma}\label{GApproxMandel}
		Let $G$ be the Green's function defined on the set $\mC(\im{1})\backslash\cM^p$. We have, for a sufficiently large $m$,
		\begin{align*}
		G(c)\approx \frac{\ln|c_m|}{p^{m-1}}, \quad |G'(c)| \approx \frac{|c_m'|}{p^{m-1} |c_m|}\quad\text{and}\quad e^{G(c)} \approx |c_m|^{\frac{1}{p^{m-1}}}
		\end{align*}
		where $c_m := f_c^m(0)$, and $c_m' := \frac{d}{dc} (f_c^m (0))$.
		\end{lemma}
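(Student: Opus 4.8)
The plan is to mirror the proof of Lemma \ref{Gapprox}, the only genuinely new ingredient being the identity $\psi(c) = \phi_c(c)$ together with the index shift it produces. First I would write down the Böttcher limit formula for $\phi_c$, namely $\phi_c(z) = \lim_{m\to\infty}[f_c^m(z)]^{1/p^m}$, and evaluate it at $z = c$. Since $f_c(0) = c$, we have $f_c^m(c) = f_c^{m+1}(0) = c_{m+1}$, so that
\[
\psi(c) = \phi_c(c) = \lim_{m\to\infty}[c_{m+1}]^{1/p^m} = \lim_{n\to\infty}[c_n]^{1/p^{n-1}},
\]
after reindexing $n = m+1$. Taking the logarithm of the modulus then yields
\[
G(c) = \ln|\psi(c)| = \lim_{n\to\infty}\frac{\ln|c_n|}{p^{n-1}},
\]
and truncating this convergent sequence at a sufficiently large $m$ gives the first approximation $G(c)\approx \ln|c_m|/p^{m-1}$. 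The appearance of the exponent $p^{m-1}$ in place of the $p^m$ of Lemma \ref{Gapprox} is exactly this one-step shift coming from evaluating $\phi_c$ at $c = f_c(0)$.

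The third approximation is then immediate: exponentiating the first gives $e^{G(c)}\approx |c_m|^{1/p^{m-1}}$. For the second, I would differentiate the first approximation with respect to the real and imaginary parts of $c = x + y\im{1}$, exactly as in the filled-in Julia case:
\begin{align*}
|G'(c)| &\approx \frac{1}{p^{m-1}|c_m|}\left|\left(\frac{\partial|c_m|}{\partial x}, \frac{\partial|c_m|}{\partial y}\right)\right| = \frac{||c_m|'|}{p^{m-1}|c_m|} = \frac{|c_m'|}{p^{m-1}|c_m|},
\end{align*}
where the last equality uses the fact that $c_m = f_c^m(0)$ is a holomorphic function of the parameter $c$, so that $|c_m'| = ||c_m|'|$ (appendix B in \cite{Martineau}).

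The main obstacle is the bookkeeping around the index shift: one must be careful that $\psi$ is built from $\phi_c$ evaluated at the \emph{parameter} $c$, and that iterating $f_c$ starting from $c$ coincides with iterating from $0$ with the count advanced by one. Once that identity is in place, the uniform convergence of the Böttcher limit on compact subsets (inherited from the stated properties of $\psi$) legitimizes replacing $G(c)$ and its derivative by their large-$m$ truncations, and the remainder of the argument is precisely the differentiation computation already carried out in Lemma \ref{Gapprox}.
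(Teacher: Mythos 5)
Your proposal is correct and follows essentially the same route as the paper: both rest on the identity $f_c^m(c)=f_c^{m+1}(0)$ and the Böttcher limit formula to produce the shifted exponent $p^{m-1}$, and both conclude with the identical differentiation computation using $||f(c)|'|=|f'(c)|$ for the polynomial $c\mapsto f_c^m(0)$. The only cosmetic difference is that the paper packages the index shift as the intermediate identity $G_{\cM^p}(c)=p\cdot G_{\cK_c^p}(0)$ and then cites Lemma \ref{Gapprox}, whereas you reindex the limit directly.
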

		\begin{proof}
		We know that $f^m_c(c) = f^{m+1}_c(0)$ for all $m\in\mN$. Let $G_{\cM^p}$ and $G_{\cK_c^p}$ be the Green's functions defined on the sets $\mC(\im{1})\backslash\cM^p$ and $\mC(\im{1})\backslash\cK_c^p$ respectively. Since $\psi (c) = \phi_c (c)$ and using the properties of $\phi_c$ for $c \not \in \cM^p$, we can see that $G_{\cM^p}$ can be expressed as
			\begin{align*}
			G_{\cM^p}(c) &= \ln |\phi_c(c)| = \lim_{m \rightarrow \infty} \frac{\ln \left| f_c^m(c) \right|}{p^m} = p \lim_{m \rightarrow \infty} \frac{\ln \left|f_c^{m+1}(0) \right|}{p^{m+1}} = p\cdot G_{\cK_c^p}(0).
			\end{align*}
		Hence, for a sufficiently large $m$, we have from Lemma \ref{Gapprox} that
			\begin{align*}
			G_{\cM^p}(c) = p\cdot G_{\cK_c^p}(0) \approx p\cdot\frac{\ln|f^m_c(0)|}{p^m} = \frac{\ln|c_m|}{p^{m-1}}.
			\end{align*}
		It follows that $e^{G_{\cM^p}(c)} \approx |f^m_c(0)|^{\frac{1}{p^{m-1}}}$ when $m \ra \infty$. Moreover, since $||f(c)|'| = |f'(c)|$ $\forall c \in U$ for any holomorphic function $f$ on an open set $U \subset \mC (\bo )$,
			\begin{align*}
			\left|G'_{\cM^p}(c)\right| &\approx \left|\left(\frac{\partial}{\partial x}\left(\frac{\ln|c_m|}{p^{m-1}}\right), \frac{\partial}{\partial y}\left(\frac{\ln|c_m|}{p^{m-1}}\right)\right)\right| \\
			&= \frac{1}{p^{m-1}|c_m|}\left|\left(\frac{\partial |c_m|}{\partial x}, \frac{\partial |c_m|}{\partial y}\right)\right|\\
			&= \frac{||c_m|'|}{p^{m-1}|c_m|}\\
			&= \frac{|c_m'|}{p^{m-1}|c_m|}.\qedhere
			\end{align*}
		\end{proof}
	Hence, using Lemma \ref{GApproxMandel}, approximations for the bounds of the distance from a point to a Multibrot may be proven and found explicitly. The approach is similar to the proof of Theorem \ref{ComplexJDistEstimate}.
	\begin{theorem}\label{ComplexMDistEstimate}
		The distance $d(c_0, \cM^p)$ from a point $c_0\in \mC(\im{1})\backslash\cM^p$ to the set $\cM^p$ can be approximated in the following way:	
		\begin{align*}
			\frac{|c_m|\ln|c_m|}{2|c_m|^{1/p^{m-1}}|c_m'|} < d (c_0 , \cM^p ) < \frac{2|c_m|\ln|c_m|}{|c_m'|}
		\end{align*}
		where $c_m := f_{c_0}^m(0)$, and $c_m' := \frac{d}{dc} \left.(f_c^m (0))\right|_{c=c_0}$.
	\end{theorem}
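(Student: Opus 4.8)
The plan is to combine the exact Green's-function bounds for $\cM^p$ established in the (unlabelled) theorem immediately preceding Lemma \ref{GApproxMandel} with the three asymptotic estimates of that lemma, exactly as the proof of Theorem \ref{ComplexJDistEstimate} combined the Green bounds for $\cK_c^p$ with Lemma \ref{Gapprox}. Concretely, I would start from
\begin{align*}
\frac{\sinh (G(c_0))}{2e^{G(c_0)}|G'(c_0)|} < d (c_0 , \cM^p ) < \frac{2\sinh (G(c_0))}{|G'(c_0)|}
\end{align*}
and substitute the approximations $G(c_0)\approx \frac{\ln|c_m|}{p^{m-1}}$, $|G'(c_0)| \approx \frac{|c_m'|}{p^{m-1}|c_m|}$, and $e^{G(c_0)} \approx |c_m|^{1/p^{m-1}}$ from Lemma \ref{GApproxMandel}, each valid for $m$ large.

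For the upper bound, I would first replace $\sinh(G(c_0))$ by $G(c_0)$, using the elementary fact that $\sinh(x)\approx x$ as $x\to 0$; this step is legitimate because $G(c)\to 0$ as $c\to\partial\cM^p$, so the hypothesis that $c_0$ lies close to $\cM^p$ forces $G(c_0)$ to be near $0$. The computation then reduces to
\begin{align*}
\frac{2\sinh (G(c_0))}{|G'(c_0)|} \approx \frac{2\,\frac{\ln|c_m|}{p^{m-1}}}{\frac{|c_m'|}{p^{m-1}|c_m|}} = \frac{2|c_m|\ln|c_m|}{|c_m'|},
\end{align*}
where the common factor $p^{m-1}$ cancels.

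For the lower bound, I would again invoke $\sinh(G(c_0))\approx G(c_0)$ and then bring in the extra factor $e^{G(c_0)}\approx |c_m|^{1/p^{m-1}}$, so that
\begin{align*}
\frac{\sinh (G(c_0))}{2e^{G(c_0)}|G'(c_0)|} \approx \frac{\frac{\ln|c_m|}{p^{m-1}}}{2\,|c_m|^{1/p^{m-1}}\,\frac{|c_m'|}{p^{m-1}|c_m|}} = \frac{|c_m|\ln|c_m|}{2|c_m|^{1/p^{m-1}}|c_m'|}.
\end{align*}
Once more the $p^{m-1}$ factors cancel, yielding the claimed lower estimate.

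The computation itself is routine; the only genuine care needed is in the bookkeeping of the simultaneous validity of all the approximations. Specifically, the main obstacle is justifying that $m$ may be taken large enough and $c_0$ close enough to $\cM^p$ that the three estimates of Lemma \ref{GApproxMandel} and the linearization $\sinh(x)\approx x$ hold together with suitably controlled error, so that their composition still produces a meaningful two-sided estimate rather than accumulating uncontrolled error. This is precisely where the appeal to ``$c_0$ close to $\cM^p$'' (as in Theorem \ref{ComplexJDistEstimate}) is essential, since it is what makes $G(c_0)$ small and the $\sinh$ linearization accurate.
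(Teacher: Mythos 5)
Your proposal is correct and follows essentially the same route as the paper, which proves this theorem only by remarking that one combines the Green's-function bounds for $\cM^p$ with the three estimates of Lemma \ref{GApproxMandel} ``similar to the proof of Theorem \ref{ComplexJDistEstimate}'' --- exactly the substitution and $\sinh(x)\approx x$ linearization you carry out. Your explicit cancellation of the $p^{m-1}$ factors and your remark about the simultaneous validity of the approximations for $c_0$ near $\partial\cM^p$ simply spell out what the paper leaves implicit.
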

	
	\begin{figure}
		\centering
		\subfigure[$\cM^3$]{\includegraphics[width=0.3\linewidth]{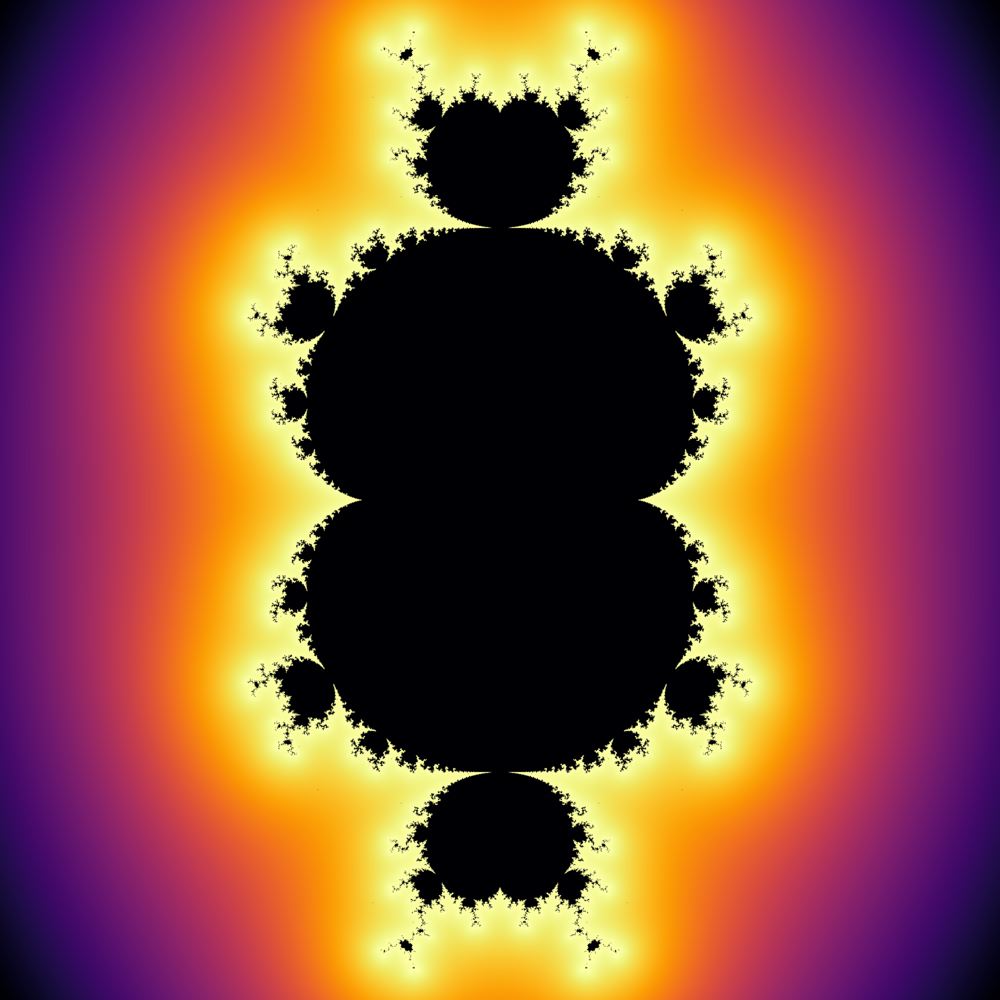}}
		\subfigure[$\cM^8$]{\includegraphics[width=0.3\linewidth]{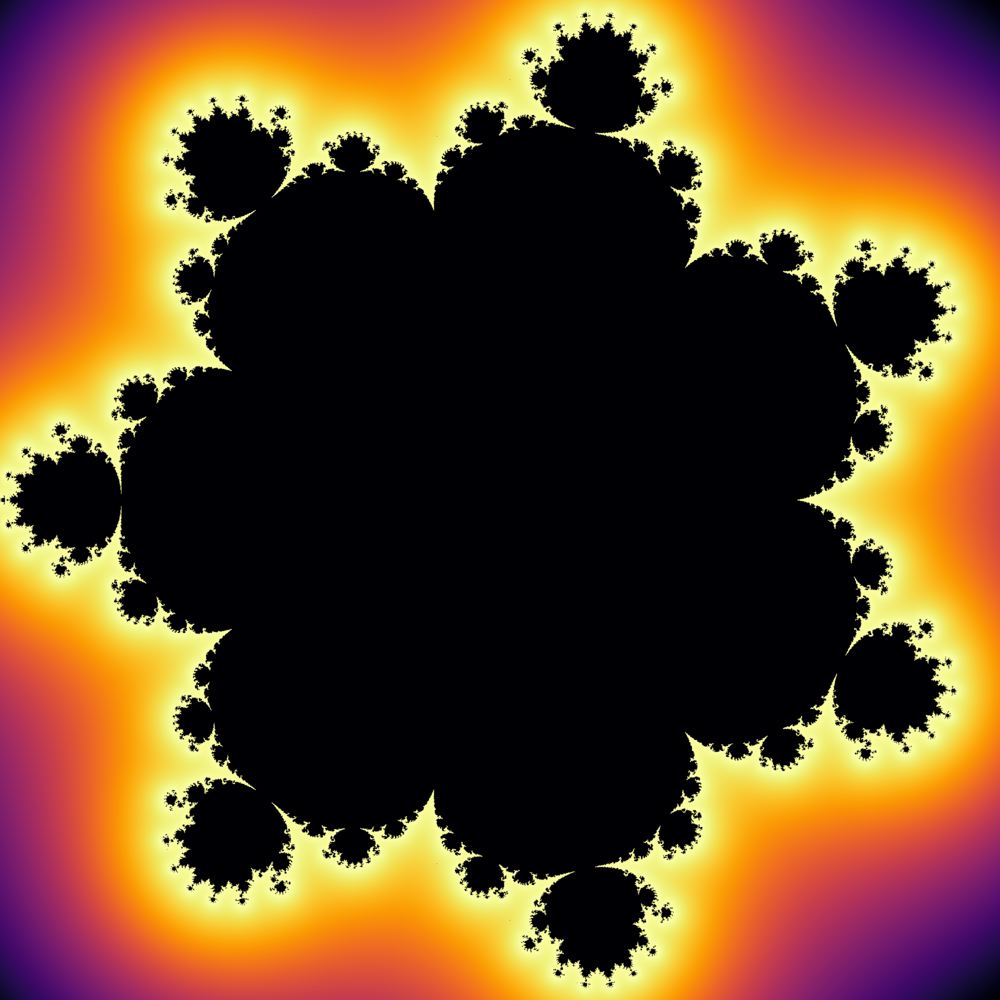}}
		\subfigure[$\cM^{15}$]{\includegraphics[width=0.3\linewidth]{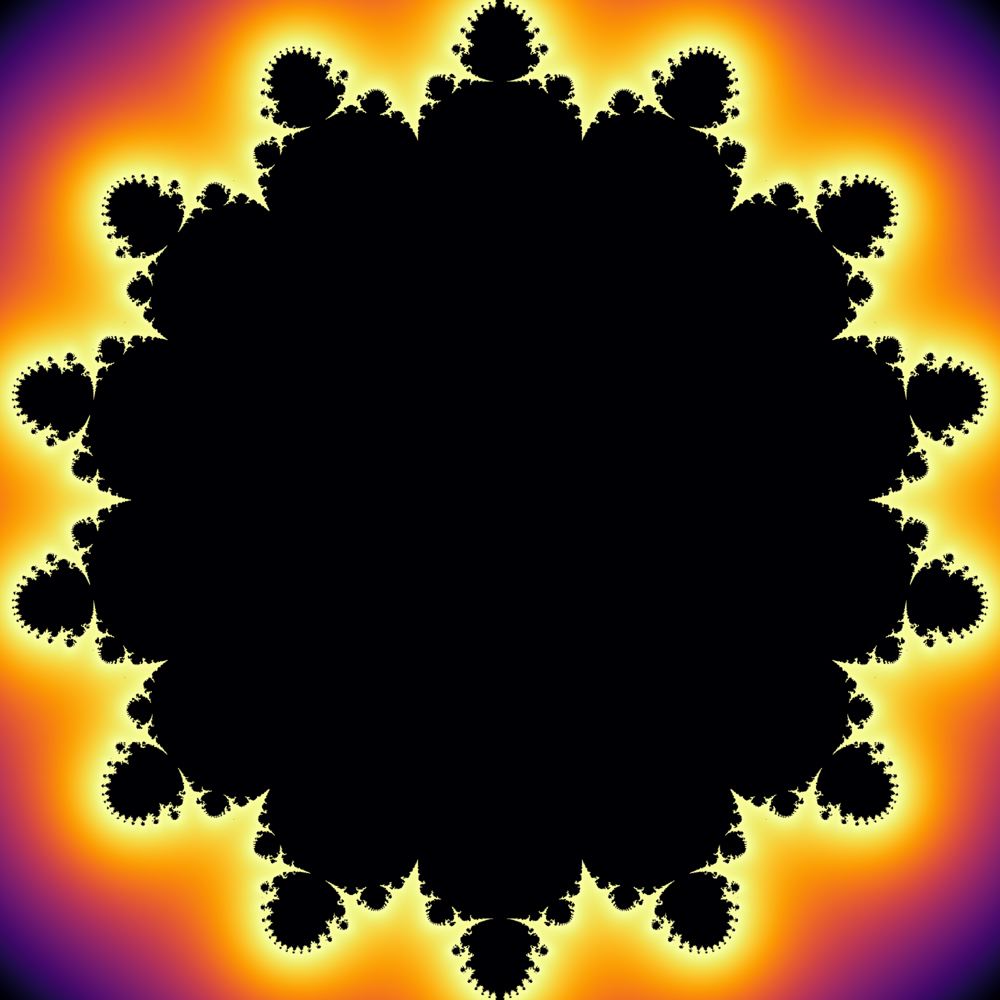}}
		\caption{The Multibrot sets of orders $p=3$, $p=8$ and $p=15$.}
	\end{figure}
		
	\subsection{Distance to a tricomplex fractal}
	The key result is the following. Define, for $\eta \in \TC\backslash\mathrm{X}$,
		\begin{align*}
		d(\eta, \mathrm{X} ) := \inf \oa \Vert \zeta - \eta \Vert_3 \, : \, \zeta \in \mathrm{X} \fa,
		\end{align*}
	which gives the distance between the point $\eta$ and the set $\mathrm{X} \subset \mT \mC$.
	\begin{theorem}\label{KeyTheoremTCDistance}
	If $\mathrm{X} \subset \TC$ is a compact set and 
		\begin{align*}
		\mathrm{X} = (\mathrm{X}_{\id{1}\id{3}} \times_{\id{1}} \mathrm{X}_{\ic{1}\id{3}} ) \times_{\id{3}} (\mathrm{X}_{\id{1}\ic{3}} \times_{\id{1}} \mathrm{X}_{\ic{1}\ic{3}} ),
		\end{align*}
		then
		\begin{align*}
		d( \eta , \mathrm{X} ) = \sqrt{\frac{\sum_{\gamma \in S_{1,3}} d(\eta_{\gamma}, \mathrm{X}_{\gamma})^2 }{4}} \text{.}
		\end{align*}
	\end{theorem}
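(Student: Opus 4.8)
The plan is to reduce the single tricomplex distance to four independent complex distance problems by passing to the idempotent coordinates. First I would unwind the nested Cartesian product: writing $\zeta = a\,\id{3} + b\,\ic{3}$ with $a \in \mathrm{X}_{\id{1}\id{3}} \times_{\id{1}} \mathrm{X}_{\ic{1}\id{3}}$ and $b \in \mathrm{X}_{\id{1}\ic{3}} \times_{\id{1}} \mathrm{X}_{\ic{1}\ic{3}}$, and then decomposing $a$ and $b$ further in the $\id{1}$-coordinates, one sees that an element $\zeta$ lies in $\mathrm{X}$ if and only if $\zeta = \sum_{\gamma \in S_{1,3}} \zeta_{\gamma}\,\gamma$ with each component $\zeta_{\gamma} \in \mathrm{X}_{\gamma}$, and crucially with no constraint linking the four components. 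Thus, in idempotent coordinates, $\mathrm{X}$ is exactly the product of the sets $\mathrm{X}_{\gamma}$, and each coordinate may be chosen freely.

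Next I would apply the norm identity \eqref{NormIdem} to $\zeta - \eta$. Since the $\gamma$-coordinate of $\zeta - \eta$ is simply $\zeta_{\gamma} - \eta_{\gamma}$, this gives
\[
\Vert \zeta - \eta \Vert_3^2 = \frac{1}{4}\sum_{\gamma \in S_{1,3}} |\zeta_{\gamma} - \eta_{\gamma}|^2 .
\]
Minimizing the left-hand side over $\zeta \in \mathrm{X}$ then amounts to minimizing a sum of four nonnegative terms in which the $\gamma$-th term depends only on $\zeta_{\gamma}$, and these variables range independently. The core step is to push the infimum through the sum, yielding
\[
d(\eta, \mathrm{X})^2 = \frac{1}{4}\sum_{\gamma \in S_{1,3}} \inf_{\zeta_{\gamma} \in \mathrm{X}_{\gamma}} |\zeta_{\gamma} - \eta_{\gamma}|^2 = \frac{1}{4}\sum_{\gamma \in S_{1,3}} d(\eta_{\gamma}, \mathrm{X}_{\gamma})^2 ,
\]
and taking square roots gives the stated formula.

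The one point requiring care is the interchange of the infimum with the sum, and this is precisely where the product structure and the compactness hypothesis enter; I would argue it as two inequalities. For ``$\geq$'', any $\zeta \in \mathrm{X}$ satisfies $|\zeta_{\gamma} - \eta_{\gamma}| \geq d(\eta_{\gamma}, \mathrm{X}_{\gamma})$ for each $\gamma$, so $\Vert \zeta - \eta \Vert_3^2 \geq \tfrac{1}{4}\sum_{\gamma} d(\eta_{\gamma}, \mathrm{X}_{\gamma})^2$, and taking the infimum over $\zeta$ preserves this bound. For ``$\leq$'', compactness of $\mathrm{X}$ (inherited by each $\mathrm{X}_{\gamma}$ through the homeomorphisms $\Gamma_1$ and $\Gamma_2$) lets me choose minimizers $\zeta_{\gamma}^{*} \in \mathrm{X}_{\gamma}$ with $|\zeta_{\gamma}^{*} - \eta_{\gamma}| = d(\eta_{\gamma}, \mathrm{X}_{\gamma})$; assembling $\zeta^{*} := \sum_{\gamma} \zeta_{\gamma}^{*}\,\gamma$, which lies in $\mathrm{X}$ by the unconstrained product structure, realizes the sum of component distances and forces equality. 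I expect no genuine obstacle beyond this bookkeeping: the whole argument is simply that the Euclidean distance to a product set is the Pythagorean combination of the coordinate distances, transported through the weighted-sum form of the norm in \eqref{NormIdem}.
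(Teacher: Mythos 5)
Your proposal is correct and takes essentially the same route as the paper: both pass to the idempotent coordinates, apply the norm identity \eqref{NormIdem} to $\zeta - \eta$, and minimize the four coordinate terms independently, using the product structure and the compactness of the factors to assemble a componentwise minimizer $\zeta^{*} \in \mathrm{X}$ realizing the infimum. The only difference is presentational: the paper packages the componentwise-minimization step as a separate statement (Lemma \ref{fContinueSeparable}) on separable continuous functions over product sets, whereas you argue the same interchange of infimum and sum inline via two inequalities.
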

	
	We will need a lemma to prove this theorem.
	
	\begin{lemma}\label{fContinueSeparable}
	Let $\mathrm{X} \subset \mT \mC$ be a compact set such that
	\begin{align*}
		\mathrm{X} = (\mathrm{X}_{\id{1}\id{3}} \times_{\id{1}} \mathrm{X}_{\ic{1}\id{3}} ) \times_{\id{3}} (\mathrm{X}_{\id{1}\ic{3}} \times_{\id{1}} \mathrm{X}_{\ic{1}\ic{3}} )
	\end{align*}
	and $f : \mathrm{X} \ra [0, +\infty )$ be a continuous function such that
	\begin{align*}
		f(\eta ) = \sum_{\gamma \in S_{1, 3}} f_{\gamma}(\eta_{\gamma})
	\end{align*}
	where $f_{\gamma}$ is continuous $\forall\gamma\in S_{1,3}$. Then, $f$ has a maximum at $a \in \mathrm{X}$ and a minimum at $b \in \mathrm{X}$ with
	\begin{align*}
		a = \sum_{\gamma \in S_{1,3}} a_{\gamma} \gamma \text{ et } b = \sum_{\gamma \in S_{1,3}} b_{\gamma} \gamma
	\end{align*}
	where $a_\gamma,b_\gamma\in\mathrm{X}_\gamma$ are such that $f_\gamma$ attains its maximum and minimum at $a_\gamma$ and $b_\gamma$ respectively.
	\end{lemma}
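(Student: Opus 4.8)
The plan is to reduce the statement to the classical extreme value theorem applied separately to each of the four idempotent components, exploiting the fact that the decomposition $\eta = \sum_{\gamma \in S_{1,3}} \eta_{\gamma} \gamma$ lets each component $\eta_{\gamma}$ range freely over $\mathrm{X}_{\gamma}$, owing to the Cartesian product structure of $\mathrm{X}$, while the additive form $f(\eta) = \sum_{\gamma} f_{\gamma}(\eta_{\gamma})$ decouples the optimization across components.

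First I would establish that each factor $\mathrm{X}_{\gamma}$ is compact. Composing the homeomorphisms $\Gamma_2$ and $\Gamma_1$ from Section~\ref{SecBasics}, the set $\mathrm{X}$ is homeomorphic to the ordinary Cartesian product $\mathrm{X}_{\id{1}\id{3}} \times \mathrm{X}_{\ic{1}\id{3}} \times \mathrm{X}_{\id{1}\ic{3}} \times \mathrm{X}_{\ic{1}\ic{3}}$: the outer $\times_{\id{3}}$ is unravelled by $\Gamma_2$ and each inner $\times_{\id{1}}$ by $\Gamma_1$. Since $\mathrm{X}$ is compact, so is this product, and therefore each factor $\mathrm{X}_{\gamma}$, being the continuous image of the product under a coordinate projection, is compact as well.

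Next, because each $f_{\gamma}$ is continuous on the compact set $\mathrm{X}_{\gamma}$, the extreme value theorem guarantees that $f_{\gamma}$ attains a maximum at some $a_{\gamma} \in \mathrm{X}_{\gamma}$ and a minimum at some $b_{\gamma} \in \mathrm{X}_{\gamma}$. Setting $a := \sum_{\gamma \in S_{1,3}} a_{\gamma} \gamma$ and $b := \sum_{\gamma \in S_{1,3}} b_{\gamma} \gamma$, both elements lie in $\mathrm{X}$ precisely because the Cartesian product structure permits an arbitrary choice of one component from each $\mathrm{X}_{\gamma}$. I would then conclude using the additivity of $f$: for any $\eta = \sum_{\gamma} \eta_{\gamma} \gamma \in \mathrm{X}$ one has $f(\eta) = \sum_{\gamma} f_{\gamma}(\eta_{\gamma}) \leq \sum_{\gamma} f_{\gamma}(a_{\gamma}) = f(a)$, so $a$ is a global maximizer, and symmetrically $f(\eta) \geq \sum_{\gamma} f_{\gamma}(b_{\gamma}) = f(b)$, so $b$ is a global minimizer.

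The only genuinely delicate point is the first step, namely that the factor sets $\mathrm{X}_{\gamma}$ inherit compactness from $\mathrm{X}$, together with the companion observation that an arbitrary recombination $\sum_{\gamma} c_{\gamma} \gamma$ with $c_{\gamma} \in \mathrm{X}_{\gamma}$ lands back in $\mathrm{X}$; both are handled cleanly by the homeomorphism property of $\Gamma_1$ and $\Gamma_2$. Once the components are known to vary independently over compact sets, the remaining steps are immediate, since maximizing (respectively minimizing) a sum of functions of independent variables amounts to maximizing (minimizing) each summand separately.
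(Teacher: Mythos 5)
Your proof is correct and follows essentially the same route as the paper's: each factor $\mathrm{X}_{\gamma}$ is compact, the extreme value theorem yields $a_{\gamma}$ and $b_{\gamma}$ for each $f_{\gamma}$, and the additive form of $f$ together with the product structure of $\mathrm{X}$ shows that $a = \sum_{\gamma \in S_{1,3}} a_{\gamma}\gamma$ and $b = \sum_{\gamma \in S_{1,3}} b_{\gamma}\gamma$ are global extremizers. Your explicit justification of the compactness of the factors via the homeomorphisms $\Gamma_1$ and $\Gamma_2$ is a detail the paper leaves implicit, but it does not change the argument.
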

	\begin{proof}
	Considering that $\mathrm{X}$ is a compact set, it follows that $\mathrm{X}_{\gamma}$ is a compact set $\forall \gamma \in S_{1,3}$. Since $f_\gamma$ is continuous, there exists some $a_{\gamma},b_\gamma \in \mathrm{X}_{\gamma}$ such that $f_\gamma(a_{\gamma}) = \sup \oa f_\gamma(\eta_{\gamma}) \, : \, \eta_{\gamma} \in \mathrm{X}_{\gamma} \fa$, and $f_\gamma(b_{\gamma}) = \inf \oa f_\gamma(\eta_{\gamma}) \, : \, \eta_{\gamma} \in \mathrm{X}_{\gamma} \fa$. Therefore, we see that
	\begin{align*}
		f(\eta ) = \sum_{\gamma \in S_{1,3}} f_{\gamma} (\eta_{\gamma}) \leq \sum_{\gamma \in S_{1,3}} f_{\gamma} (a_{\gamma}) = f(a)\ \forall\eta\in\mathrm{X},
	\end{align*}
	and so
	\begin{align*}
		\sup_{\eta \in \mathrm{X}} f(\eta ) \leq f(a) \text{.}
	\end{align*}
	Since $a \in \mathrm{X}$, it follows that $f(a) = \sup_{\eta \in \mathrm{X}} f(\eta )$. We use similar arguments to prove that $f(b) = \inf_{\eta \in \mathrm{X}} f(\eta )$.\qedhere
	\end{proof}
	
	\begin{proof}[\textit{Proof of Theorem \ref{KeyTheoremTCDistance}}]
	Let $\eta \in \TC\backslash \mathrm{X}$ and $\zeta \in \mathrm{X}$. We know that
		\begin{align*}
		\zeta - \eta = \sum_{\gamma \in S_{1,3}} (\zeta_{\gamma} - \eta_\gamma ) \gamma.
		\end{align*}
	From equation (\ref{NormIdem}) presented in section \ref{SecBasics}, we have that
		\begin{align*}
		\Vert \zeta - \eta \Vert_3^2 = \frac{\sum_{\gamma \in S_{1,3}} |\zeta_\gamma - \eta_\gamma|^2}{4}.
		\end{align*}
	Since $\mathrm{X}$ is a compact set and $f(\zeta) = \Vert \zeta - \eta \Vert_3^2$ satisfies the hypotheses of Lemma \ref{fContinueSeparable}, the function $f$ reaches its minimum at a point $b \in \mathrm{X}$ such that
		\begin{align*}
		b = \sum_{\gamma \in S_{1, 3}} b_{\gamma} \gamma \text{ and } \inf_{\zeta \in \mathrm{X}} \Vert \zeta - \eta \Vert_3^2 = \Vert b - \eta \Vert_3^2 = \frac{\sum_{\gamma \in S_{1,3}} |b_{\gamma} - \eta_\gamma|^2}{4}.
		\end{align*}
	Since $|b_\gamma - \eta_\gamma|^2 = d( \eta_\gamma , \mathrm{X}_{\gamma} )^2$ $\forall \gamma \in S_{1.3}$ from Lemma \ref{fContinueSeparable}, we have that
		\begin{align*}
		d( \eta, \mathrm{X} ) &= \Vert b - \eta \Vert_3 = \sqrt{\frac{\sum_{\gamma \in S_{1,3}} d(\eta_\gamma, \mathrm{X}_{\gamma})^2}{4}}.\qedhere
		\end{align*}
	\end{proof}
	
	By using this theorem with $\mathrm{X} = \cK_{3,c}^p$ or $\mathrm{X} = \cM_3^p$ and by applying the results from the complex plane, namely Theorems \ref{ComplexJDistEstimate} and \ref{ComplexMDistEstimate}, we obtain lower and upper bounds for the distances $d(\eta, \cK_{3,c}^p)$ and $d(c, \cM_3^p)$ respectively.

\section{Computer experiments}
	Theorem \ref{KeyTheoremTCDistance} is the main tool to visualize some tridimensional slices of the tricomplex Julia sets and the tricomplex Multibrot sets using the distance estimations in the complex plane.
	
	Let $\mI(3) = \{1,\im{1},\im{2},\im{3},\im{4},\jm{1},\jm{2},\jm{3}\}$. We now recall some basic definitions from \cite{GarantPelletier, GarantRochon, Parise, RochonParise}.
	
	\begin{definition}
	Let $\im{m},\im{k},\im{l}\in\mI(3)$ with $\im{m} \neq \im{k}$, $\im{m} \neq \im{l}$ and $\im{k} \neq \im{l}$. We define the following vector subspace of $\mT \mC$ : 
		\begin{align*}
		\mT (\im{m} , \im{k} , \im{l} ) := \vspan_{\mR} \oa \im{m} , \im{k} , \im{l} \fa \text{.}
		\end{align*}
	\end{definition}
	\begin{remark}
	The notation $\vspan_{\mR}$ stands for the linear space spanned by some vectors over the field $\mR$. Equivalently, it stands for the space of all linear combinations of those vectors.
	\end{remark}
	
	\begin{definition}
	Let $\im{m},\im{k},\im{l}\in\mI(3)$ with $\im{m} \neq \im{k}$, $\im{m} \neq \im{l}$ and $\im{k} \neq \im{l}$. 
		\begin{enumerate}
		\item We define a 3D principal slice of the Multibrot set $\cM_{3}^p$ as
			\begin{align*}
			\cT^p (\im{m} , \im{k} , \im{l} ) := \oa c \in \mT (\im{m} , \im{k} , \im{l} ) \, : \, \oa Q_{p,c}^m (0) \fa_{m=1}^{\infty} \text{ is bounded} \fa \text{.}
			\end{align*}
		\item We define a 3D principal slice of the filled-in Julia set $\cK_{3,c}^p$ as
			\begin{align*}
			\cF_c^p (\im{m} ,  \im{k} , \im{l} ) := \oa z \in \mT (\im{m} , \im{k} , \im{l} ) \, : \, \oa Q_{p,c}^m (z) \fa_{m = 1}^{\infty} \text{ is bounded} \fa \text{.}
			\end{align*}
		\end{enumerate}
	\end{definition}
	
	\begin{remark}
	When the context is clear, we only write $\cT^p$ instead of $\cT^p (\im{m} , \im{k} , \im{l} )$ and, similarly, $\cF_c^p$ instead of $\cF_c^p (\im{m} , \im{k} , \im{l} )$.
	\end{remark}

	We are now interested in giving some examples of 3D slices of the Multibrot sets and filled-in Julia sets.
	
	\begin{example}
	Let $\im{m} = 1$, $\im{k} = \bo$ and $\im{l} = \bt$. In this case, $\cT^2 (1, \bo , \bt )$ is the classical \textit{Tetrabrot} introduced by Rochon in \cite{Rochon1}. Figure \ref{figTetrabrot} shows some examples of Tetrabrot sets. We note that the colors on the pictures are generated in conformity with the generalized Fatou-Julia Theorem in multicomplex spaces (see \cite{GarantRochon}).
	
	Recall the distance formula from Theorem \ref{KeyTheoremTCDistance},
		\begin{align*}
		d (c, \cM_3^p ) = \sqrt{\frac{\sum_{\gamma \in S_{1,3}} d (c_{\gamma} , \cM^p)^2}{4}}.
		\end{align*}
	So, if $c \in \cT^p (1, \bo , \bt )$, then $c = c_0 + c_1 \bo + c_2 \bt$. From the idempotent representation, we have that
		\begin{align}
		c &= (c_1 + c_2 \bo - c_3\bo ) \id{1} \id{3} + (c_1 + c_2 \bo + c_3 \bo ) \ic{1} \id{3} \notag\\
		&\phantom{=} + (c_1 + c_2 \bo - c_3\bo ) \id{1} \ic{3} + (c_1 + c_2 \bo + c_3 \bo ) \ic{1} \ic{3} \text{.} \label{idempTetra}
		\end{align}
	Thus, let $c_{\id{1}} := c_1 + c_2 \bo - c_3 \bo$ and $c_{\ic{1}} := c_1 + c_2 \bo + c_3 \bo$, then
		\begin{align*}
		d( c , \cM_3^p ) &= \sqrt{\frac{2 d(c_{\id{1}} , \cM^p)^2 + 2 d(c_{\ic{1}} , \cM^p)^2}{4}} = \sqrt{\frac{d(c_{\id{1}} , \cM^p)^2 + d (c_{\ic{1}} , \cM^p )^2}{2}} \text{.}
		\end{align*}
	Using this last equation and the distance approximations in the complex plane, we can trace the slice $\cT^p (1 , \bo , \bt )$ of the Multibrot sets. In fact, knowing that $c\in\cM_3^p\Leftrightarrow c_{\gamma_1},c_{\overline{\gamma}_1}\in\cM^p$, the expression \eqref{idempTetra} leads to the following characterization of the Tetrabrot (see \cite{Rochon1} for the case $p=2$):
		\begin{align*}
		\cT^p (1 , \bo , \bt ) = \bigcup_{y \in \mR} \Big( \left[\left(\cM^p - y\im{1}\right) \cap \left(\cM^p + y\im{1}\right) \right] + y\im{2}\Big).
		\end{align*}
	%However, we can show that $I_2$ is not a connected set. So, this characterization of the Tetrabrot answers a conjecture proposed by D. Rochon in \cite{Rochon1} : The classical Tetrabrot is not a connected set.
	\begin{figure}
		\centering
		\subfigure[$\cT^3(1,\im{1},\im{2})$]{\includegraphics[scale=0.4]{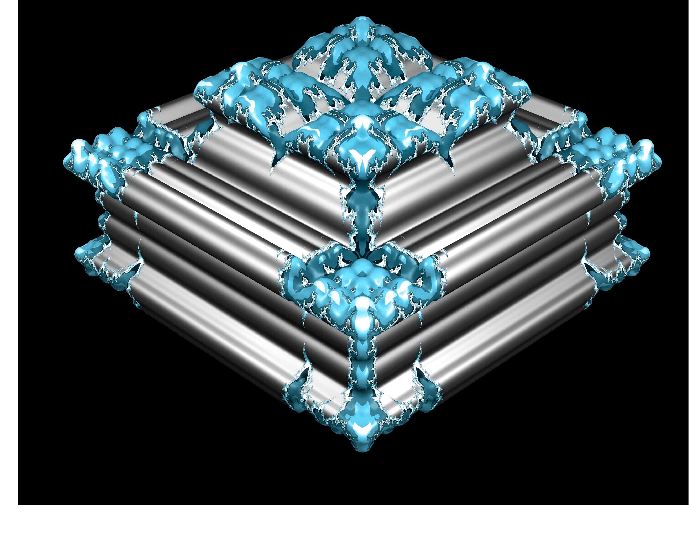}}
		\subfigure[$\cT^4(1,\im{1},\im{2})$]{\includegraphics[scale=0.4]{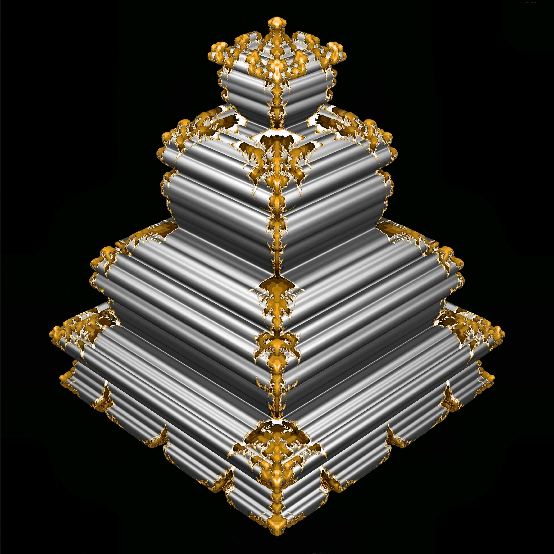}}
		\caption{The Tetrabrot sets with $p=3$ and $p=4$.}\label{figTetrabrot}
	\end{figure}
	\end{example}
	
	\begin{example}
	Let $\im{m} = 1$, $\im{k} = \bjp$, and $\im{l} = \bjd$. Then, the 3D principal slice $\cT^p (1, \bjp , \bjd )$ is called the \textit{Airbrot} (or the \textit{Perplexbrot}). It is proved in \cite{RochonThomParise, RochonParise, RochonParise2} that the Airbrot is a regular octahedron for all integer $p \geq 2$, and using the idempotent representation, we can characterize this 3D slice as 
		\begin{align*}
		\cT^p (1 , \bjp , \bjd ) = \bigcup_{y \in \mR} \Big( \oc \op \cH^p - y\bjp \fp \cap \op \cH^p + y \bjp \fp \fc + y \bjd \Big)
		\end{align*}
	where $\cH^p$ is the hyperbolic Multibrot of order $p$, that is
	\[\cH^p = \{c = x_1 + x_2\jm{1}\ |\ c\in\cM_2^p\text{ and } x_1,x_2\in\mR\}.\]
	See \cite{Parise, RochonThomParise, RochonParise, RochonParise2} for more details on hyperbolic Multibrots.
	
	\begin{figure}
	\centering
		\subfigure[$\cT^2 (1 , \bjp , \bjd )$]{\includegraphics[scale=0.3]{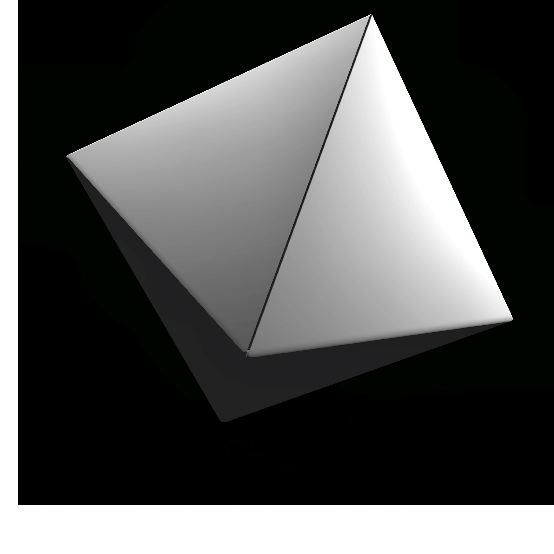}}
		\subfigure[$\cT^5 (1 , \bjp , \bjd )$]{\includegraphics[scale=0.3]{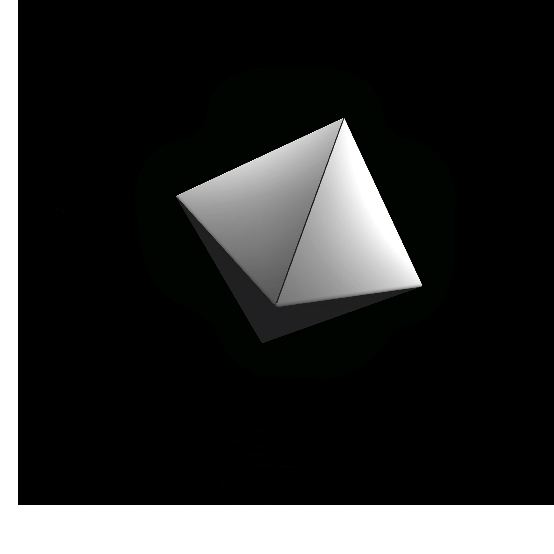}}
		\caption{The Airbrots with $p=2$ and $p=5$.}\label{figPerplexbrot}
	\end{figure}
	\end{example}
	
	\begin{example}
	Let $\im{m} = 1$, $\im{k} = \bo$ and $\im{l} = \bjp$. We get the 3D principal slice $\cT^p (1 , \bo , \bjp )$ called the \textit{Arrowheadbrot}. Figure \ref{figT1i1j1} shows pictures of this slice for $p=3$ and $p=4$. Let $c \in \cT^p (1, \bo , \bjp )$ such that $c = c_1 + c_2 \bo + c_3 \bjp$. Then, the idempotent representation of $c$ is
		\begin{align*}
		c & = (c_1 + c_2 \bo + c_3 ) \id{1} \id{3} + (c_1 + c_2 \bo - c_3 ) \ic{1} \id{3} \notag\\
		& \phantom{ = } + (c_1 + c_2 \bo + c_3 ) \id{1} \ic{3} + (c_1 + c_2 \bo - c_3 ) \ic{1} \ic{3}.
		\end{align*}
	Let $c_{\id{1}} := c_1 + c_2 \bo + c_3$ and $c_{\ic{1}} := c_1 + c_2\bo - c_3$.  From Theorem \ref{KeyTheoremTCDistance}, the distance from $c$ to $\cM_3^p$ is
		\begin{align*}
		d( c , \cM_3^p ) = \sqrt{\frac{d(c_{\id{1}} , \cM^p)^2 + d (c_{\ic{1}}, \cM^p)^2}{2}}.
		\end{align*}
		
		\begin{figure}
		\centering
		\subfigure[$\cT^3 (1 , \bo , \bjp )$]{\includegraphics[scale=0.3]{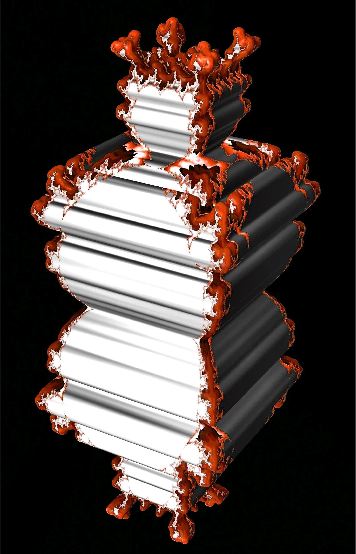}}
		\subfigure[$\cT^4 (1 , \bo , \bjp )$]{\includegraphics[scale=0.3]{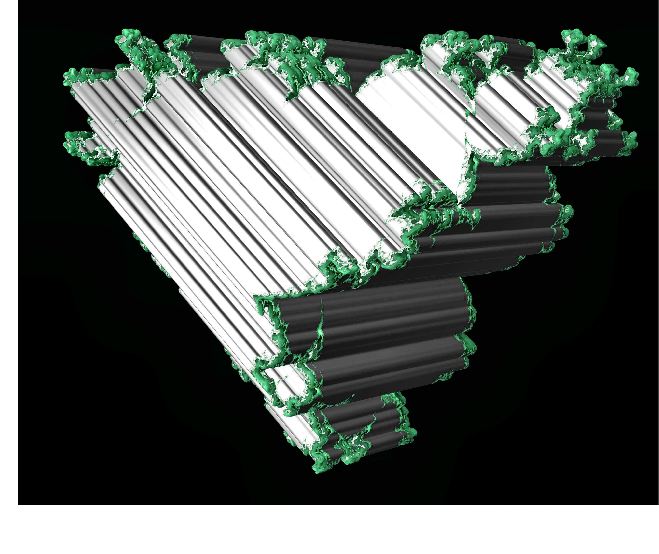}}
		\subfigure[$\cT^3 (1 , \bo , \bjp )$, zoom in]{\includegraphics[scale=0.5]{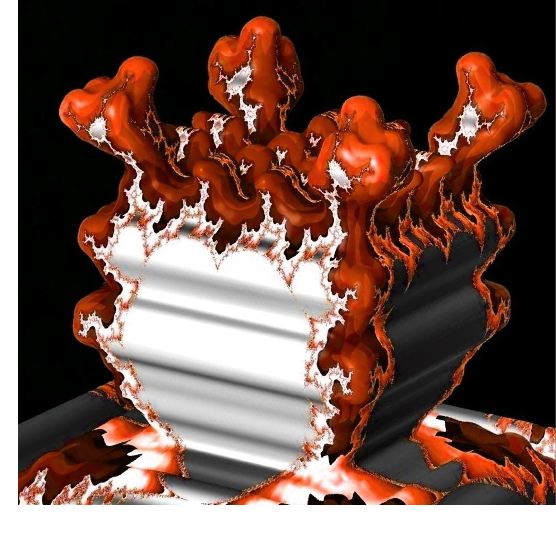}}
		\caption{The Arrowheadbrots with $p=3$ and $p=4$.}\label{figT1i1j1}
		\end{figure}
	\end{example}

\section*{Conclusion}

	Using the tricomplex numbers, we saw that it was possible to generalize the Multibrot and filled-in Julia sets to obtain new hypercomplex 3D fractal images. Moreover, some properties valid in the complex case were extended to the tricomplex space. To be able to generate those new objects, we generalized approximations of the distance between a point outside the fractal and the fractal itself. In the end, it became possible to generate 3D visuals of the 8D geometric objects.
	
	Not all 3D principal slices were shown in this article. Indeed, we focused mainly on the slices of the Multibrot sets rather than the filled-in Julia sets. Moreover, when it comes to a Multibrot set of order $p$, Parisé conjectured that there are at least eight 3D principal slices if $p$ is even and four if $p$ is odd \cite{Parise}.
	
	After generating such images, one could wonder what could be done by generalizing the tricomplex numbers further to the quadricomplex numbers. With sixteen different units, perhaps different 3D slices could be obtained. However, after exploring Multibrot sets beyond the tricomplex space, we strongly believe that there is no other 3D principal slice to find (see \cite{RochonBrouillette}). Nonetheless, the Julia sets could still surprise us.
	
	On another note, it would be possible to define different types of 3D slices. For example, the idempotent numbers give us another way to represent tricomplex numbers and, consequently, another way to define 3D slices. Therefore, the idempotent slices of the Multibrot and filled-in Julia sets will be another topic to address subsequently.

\section*{Acknowledgement}
 
DR is grateful to the Natural Sciences and Engineering Research Council of Canada (NSERC) for financial support. GB would like to thank the Fonds de Recherche du Québec - Nature et technologies (FRQNT) and the Institut des sciences mathématiques (ISM) for the awards of graduate research grants. POP would also like to thank the NSERC for the awards of a graduate research grant. The authors are grateful to Louis Hamel and Étienne Beaulac, from UQTR, for their useful work on the MetatronBrot Explorer in Java.

\bibliographystyle{abbrv}
\bibliography{ArticleDistance_Final}

\end{document}